\theoremstyle{definition}
\newtheorem{theorem}{Theorem}[subsection]
\newtheorem{theoremletters}{Theorem}
\renewcommand{\thefootnote}{\arabic{footnote}}
\theoremstyle{definition}
\newtheorem{lemma}[theorem]{Lemma}
\theoremstyle{definition}
\theoremstyle{definition}
\theoremstyle{definition}
\theoremstyle{definition}
\newtheorem{remark}[theorem]{Remark}
\theoremstyle{definition}
\newtheorem{example}[theorem]{Example}
\theoremstyle{definition}
\theoremstyle{definition}
\newtheorem{definition}[theorem]{Definition}
\numberwithin{equation}{section}
\numberwithin{equation}{section}
\theoremstyle{definition}
\theoremstyle{definition}
\newtheorem{notation}[theorem]{Notation}
\theoremstyle{definition}
\theoremstyle{definition}
\theoremstyle{definition}
\theoremstyle{definition}
\theoremstyle{definition}
\newtheorem{problem}[theorem]{Problem}
\newcommand{\norm}[1]{\left\lVert#1\right\rVert}
\newcommand{\vertiii}[1]{{\left\vert\kern-0.25ex\left\vert\kern-0.25ex\left\vert #1 
		\right\vert\kern-0.25ex\right\vert\kern-0.25ex\right\vert}}
\newcommand{\abs}[1]{\left\lvert#1\right\rvert}
\newcommand{\nocontentsline}[3]{}
\newcommand{\tocless}[2]{\bgroup\let\addcontentsline=\nocontentsline#1{#2}\egroup}
\renewcommand{\thefootnote}{\alph{footnote}}
\def\author@andify{
	\nxandlist {\unskip ,\penalty-1 \space\ignorespaces}
	{\unskip {} \@@and~}
	{\unskip \penalty-2 \space \@@and~}
}
\def\l@subsection{\@tocline{2}{0pt}{1pc}{5pc}{}} \def\l@subsection{\@tocline{2}{0pt}{2pc}{6pc}{}}
\begin{document}
	\title{A few last words on pointwise multipliers of Calder{\' o}n--Lozanovski{\u \i} spaces}
	
	\author{Tomasz Kiwerski}
	\address[Tomasz Kiwerski]{Pozna\'{n} University of Technology, Institute of Mathematics, Piotrowo 3A, 60-965 Pozna\'{n}, Poland}
	\email{\href{mailto:tomasz.kiwerski@gmail.com}{\tt tomasz.kiwerski@gmail.com}}
	
	\author{Jakub Tomaszewski}
	\address[Jakub Tomaszewski]{Pozna\'{n} University of Technology, Institute of Mathematics, Piotrowo 3A, 60-965 Pozna\'{n}, Poland}
	\email{\href{mailto:tomaszewskijakub@protonmail.com}{\tt tomaszewskijakub@protonmail.com}}
	
	\maketitle
	
	\begin{abstract}
		We will provide a complete description of the space $M(X_F,X_G)$ of pointwise multipliers between two Calder{\' o}n--Lozanovski{\u \i}
		spaces $X_F$ and $X_G$ built upon a rearrangement invariant space $X$ and two Young functions $F$ and $G$.
		Meeting natural expectations, the space $M(X_F,X_G)$ turns out to be another Calder{\' o}n--Lozanovski{\u \i} space $X_{G \ominus F}$
		with $G \ominus F$ being the appropriately understood generalized Young conjugate of $G$ with respect to $F$.
		Nevertheless, our argument is not a mere transplantation of existing techniques and requires a rather delicate analysis of the interplay
		between the space $X$ and functions $F$ and $G$.
		Furthermore, as an example to illustrate applications, we will solve the factorization problem for Calder{\' o}n--Lozanovski{\u \i} spaces.
		All this not only complements and improves earlier results (basically giving them the final touch), but also confirms the conjecture formulated
		by Kolwicz, Le{\' s}nik and Maligranda in
		[{\it Pointwise multipliers of Calder{\' o}n--Lozanovski{\u \i} spaces}, Math. Nachr. {\bf 286} (2012), no. 8-9, 876--907].
		We will close this work by formulating a number of open questions that outline a promising panorama for future research.
	\end{abstract}
	
	\tableofcontents
	
	\renewcommand{\thefootnote}{\fnsymbol{footnote}} \footnotetext{
		\textit{Date:} \longdate{\today}.
		
		\textit{2020 Mathematics Subject Classification}. Primary: 46E30; Secondary: 46B03, 46B20, 46B42.
		
		\textit{Key words and phrases}. Calder{\' o}n--Lozanovski{\u \i} spaces; generalized Young conjugate; pointwise multipliers; pointwise products; factorization.
	}
	
	\section{{\bf Introduction}} \label{SECTION : Introduction}
	
	\noindent
	1.a.\,\,{\bf Goals.}
	The main goal of this paper is to provide a description of the space $M(X_F,X_G)$ of pointwise multipliers between
	two Calder{\' o}n--Lozanovki{\u \i} spaces $X_F$ and $X_G$.
	By this we simply mean that we are looking for conditions on the function $f$ guaranteeing that the induced multiplication
	operator $M_f \colon g \mapsto fg$ is bounded when acting from $X_F$ into $X_G$. 
	Here, and hereinafter, $X$ is a rearrangement invariant space, while $F$ and $G$ are two Young functions.
	
	Anyway, before presenting our results in more detail, let us first try to explain what the sources of this problem are,
	what was our motivation and why all this may be even important.
	(For all unexplained concepts that have already appeared, or will appear, we refer to Sections~\ref{SECTION : Toolbox},
	\ref{SUBSECTION : Young conjugate} and \ref{SUBSECTION : Factorization}.)
	\newline
	
	\noindent
	1.b.\,\,{\bf Back to Orlicz spaces.}
	From a historical perspective, the {\it fons et origo} of the problem of describing the space of pointwise multipliers $M(X,Y)$
	between two function spaces $X$ and $Y$ can most likely be traced back to O'Neil's work from 1965.
	In \cite[Problem~6.2]{ONe65}, he asks
	\newline
	
	\noindent
	{\bf Problem.} (O'Neil, 1965).
	{\it How, given two Young functions $F$ and $G$, to choose the third Young function, say $H$, so that the space $M(L_F,L_G)$
	of pointwise multipliers between two Orlicz spaces $L_F$ and $L_G$ is another Orlicz space $L_H$?}
	\newline
	
	Many authors over nearly half a century have obtained mainly partial results in this direction
	(we refer, for example, to \cite{And60}, \cite{Mau74}, \cite{ONe65}, \cite{MP89}, \cite{MN10} and \cite{ZR67}).
	Putting some technical details aside, the generic result of this type looks more-or-less like this
	\newline
	
	\noindent
	{\bf Theorem.} (Maligranda and Nakaii, 2010)
	{\it If three given Young functions $F$, $G$ and $H$ satisfy the relation $F^{-1} G^{-1} \approx H^{-1}$, then $M(L_F,L_G) = L_H$.}
	\newline
	
	Unfortunately, such a result do not lead to any explicit formula for the Young function $H$ generating the space of pointwise multipliers.
	Worse, there are pairs of Young functions, say $F$ and $G$, for which there is no third Young function $H$ satisfying the relation
	$F^{-1} G^{-1} \approx H^{-1}$ even though the space $M(L_F,L_G)$ is a non-trivial Orlicz space.
	There are probably two noted examples of this kind in the existing literature, namely, \cite[Example~7.8]{KLM12} and \cite[Example~A.2]{KT22}.
	The former one concerns Orlicz spaces defined on $\mathbb{I} = [0,1]$ and gives $M(L_F,L_G) = L_{\infty}[0,1]$,
	while the second one covers both remaining cases, that is, $\mathbb{I} = [0,\infty)$\footnote{Actually, what is written in \cite[Example~A.2]{KT22}
	does not cover the case $\mathbb{I} = [0,\infty)$ at all, but with some modicum of solid effort can be modified to work then too.}
	and $\mathbb{I} = \mathbb{N}$, but is also somehow more sophisticated, because $M(L_F,L_G) \neq L_{\infty}$.
	
	Some satisfactory results crowning these struggles have only been obtained much more recently in \cite{DR00} and \cite{LT17}.
	The most important chisel that allows to crack O'Niels problem is the construction of the function $G \ominus F$.
	Recall that the value $(G \ominus F)(t)$ for $t \geqslant 0$ is defined to be $\sup \left\{ G(st) - F(s) \right\}$,
	where the supremum is taken over all $s \geqslant 0$ for which $F(s)$ is finite (cf. Definition~\ref{DEF : generalized Young conjugate};
	see Section~\ref{SUBSECTION : Young conjugate} for a discussion why this construction is natural).
	With the function $G \ominus F$ at our disposal, the mentioned solution looks as follows
	\newline
	
	\noindent
	{\bf Theorem.} (Djakov and Ramanujan, 2010; Le{\' s}nik and Tomaszewski, 2017)
	{\it Let $F$ and $G$ be two Young functions.
	Then $M(L_F,L_G) = L_{G \ominus F}$.}
	\newline
	
	For the reasons outlined above, our work can naturally be placed within the framework of O'Neil's problem in which the classical Orlicz spaces $L_F$ and $L_G$
	are replaced by a much more general\footnote{After all, the class of Orlicz spaces $L_F$ is just a particular case of Calder{\' o}n--Lozanovski{\u \i}'s
	construction $X_F$, where the space $X$ is chosen to be $L_1$.} constructions of the Calder{\' o}n--Lozanovski{\u \i} spaces $X_F$ and $X_G$, respectively.
	\newline
	
	\noindent
	1.c.\,\,{\bf One conjecture.}
	Apart from these historical marginalia, our work starts where \cite{KLM12} and \cite{KLM14} end.
	In fact, our initial motivation comes from the following conjecture formulated at the end of \cite{KLM12}.
	\newline
	
	\noindent
	{\bf Conjecture.} (Kolwicz, Le{\' s}nik and Maligranda, 2012).
	{\it Let $X$ be a rearrangement invariant function space defined on $\mathbb{I} = [0,1]$.
	Suppose that $X \neq L_{\infty}[0,1]$.
	Then we have the equality $M(X_F,X_G) = X_{G \ominus F}$.}
	\newline
	
	While working on this problem we realize, to our pleasant surprise, that we were not only able to confirm the above conjecture,
	but even provide a necessary and sufficient conditions on the triple $(X,F,G)$ for equality $M(X_F,X_G) = X_{G \ominus F}$ to holds
	(this, we hope, justifies the slightly funeral character of the title of our work).
	
	Notabene, we were also able to resolve the case when the space $X$ is defined on $\mathbb{I} = [0,\infty)$.
	This is also a bit of surprise, because the structure of rearrangement invariant spaces defined on $\mathbb{I} = [0,\infty)$
	is much more complicated that on $\mathbb{I} = [0,1]$.
	For instance, there is a whole menagerie of spaces that resemble $L_{\infty}[0,\infty)$ in the sense that they have no order continuous
	functions (they are, so to speak, \enquote{extremely} non-separable).
	In fact, just take any rearrangement invariant space defined on $\mathbb{I} = [0,\infty)$, say $X$, and consider the intersection
	space $X \cap L_{\infty}$ defined via the norm $\norm{f}_{X \cap L_{\infty}} = \max\{ \norm{f}_X, \norm{f}_{L_{\infty}} \}$.
	On the other hand, up to the equivalence of norms, there is only one rearrangement invariant space defined on $\mathbb{I} = [0,1]$
	devoid of order continuous functions, namely, $L_{\infty}[0,1]$ itself.
	This is most likely the reason why the above conjecture was formulated only in the case when the space $X$ is defined on $\mathbb{I} = [0,1]$.
	\newline
	
	\noindent
	1.d.\,\,{\bf Overview.}
	Let us take a closer look at our results.
	The first one concerns the description of the space of pointwise multipliers.
	
	\begin{theoremletters}[Theorem~\ref{THEOREM : PM between CL} and Theorem~\ref{THEOREM : main thm not nice};
		see also Notation~\ref{NOTATION : Nice triple} for what \enquote{nice triple} means] \label{THEOREM A}
		\hfill
		\begin{itemize}
			\item[(1)] {\it $M(X_F,X_G) = X_{G \ominus F}$ if, and only if, the triple $(X,F,G)$ is nice.}
			\item[(2)] {\it $M(X_F,X_G) = X_{G \ominus_1 F}$ if, and only if, either the triple $(X,F,G)$ fails to be nice or the space $X$ is a sequence space.}
		\end{itemize}
	\end{theoremletters}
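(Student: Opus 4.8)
The plan is to obtain each equality as a pair of matching inclusions, prove these inclusions in sharp form, and then read the four \enquote{if and only if} assertions off their sharpness. Throughout one works with the Calder{\' o}n--Lozanovski{\u \i} modular $I_{\varphi}(f) = \bigl\| \varphi(|f|) \bigr\|_X$, whose unit sublevel set $\{ I_{\varphi} \leqslant 1 \}$ is the closed unit ball of $X_{\varphi}$; the template for everything is the Orlicz case $X = L_1$ treated in \cite{LT17}.

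\textbf{The soft inclusion.} For \emph{every} triple one has $X_{G \ominus F} \subseteq M(X_F, X_G)$, and the same with $G \ominus_1 F$ replacing $G \ominus F$. This rests solely on the defining \enquote{Young-type} inequality $G(st) \leqslant F(s) + (G \ominus F)(t)$, valid for all $t \geqslant 0$ and all $s$ in the finiteness interval of $F$, which is immediate from Definition~\ref{DEF : generalized Young conjugate}. Given $g \in X_F$ and $f \in X_{G \ominus F}$ normalised so that $I_F(g) \leqslant 1$ and $I_{G \ominus F}(f) \leqslant 1$, insert $s = |g(x)|$, $t = |f(x)|$, apply convexity of $G$ and then the lattice monotonicity and the triangle inequality of $\| \cdot \|_X$ to conclude $I_G\!\bigl( \tfrac{1}{2} fg \bigr) \leqslant 1$; homogeneity upgrades this to $\| fg \|_{X_G} \leqslant 2 \, \| f \|_{X_{G \ominus F}} \| g \|_{X_F}$. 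No assumption on $(X,F,G)$ is used.

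\textbf{The hard inclusion.} The reverse containment $M(X_F, X_G) \subseteq X_{G \ominus F}$ is where niceness enters. Given $f$ with $\| M_f \| \leqslant 1$ we must produce $I_{G \ominus F}(cf) \leqslant 1$ for some $c > 0$, and the natural device is a \emph{modular squeezing}: slice $\mathbb{I}$ according to the value $\tau$ of $|f|$, and on each slice $E$ test $M_f$ against a function $g \approx s(\tau)\chi_E$, where $s(\tau)$ (after a suitable truncation) nearly attains $(G \ominus F)(\tau) = \sup_s \{ G(s\tau) - F(s) \}$, scaled so that $\| g \|_{X_F} \lesssim 1$. Boundedness of $M_f$ then forces $\| fg \|_{X_G} \lesssim 1$; unravelling the Calder{\' o}n construction of $X_G$ and reassembling the slices with the weight that the fundamental function of $X$ attaches to them returns exactly $I_{G \ominus F}(cf) \lesssim 1$. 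The crux is whether the near-maximiser $s(\tau)$ can be realised \emph{inside} $X_F$ simultaneously at all values $\tau$ that $|f|$ takes: this is a compatibility statement between the finiteness-and-continuity profile of $F$ and $G$ and the reservoir of order continuous functions available in $X$, and it is precisely what the notion of a nice triple (Notation~\ref{NOTATION : Nice triple}) isolates.

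\textbf{Sharpness, non-nice triples and sequence spaces.} If $(X,F,G)$ fails to be nice the squeezing stalls at the extreme scales — typically because $X$ carries no order continuous functions on sets of infinite measure, or because the finiteness domains of $F$ and $G$ are mismatched — and the largest conjugate recoverable is the truncated function $G \ominus_1 F$; re-running the two halves above with $G \ominus_1 F$ yields $M(X_F, X_G) = X_{G \ominus_1 F}$, and as $X_{G \ominus F}$ is then genuinely larger this gives the \enquote{only if} of (1) together with the first alternative of (2). For the converse in (2) — a nice triple with $X$ not a sequence space cannot have $M(X_F, X_G) = X_{G \ominus_1 F}$ — one manufactures an explicit $f \in X_{G \ominus_1 F} \setminus X_{G \ominus F}$ with unbounded multiplier, in the spirit of \cite[Example~7.8]{KLM12} and \cite[Example~A.2]{KT22}. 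For sequence spaces the discreteness of $\mathbb{N}$ always obstructs the test functions near the origin, so $G \ominus_1 F$ is forced regardless of niceness; this is dispatched by a direct computation on $\ell$-type Calder{\' o}n--Lozanovski{\u \i} spaces. I expect the main obstacle to be exactly this non-nice analysis: identifying the correct truncation $G \ominus_1 F$, bounding $\| M_f \|$ from below by $\| f \|_{X_{G \ominus_1 F}}$ through a delicate choice of $g$, and matching it with the upper estimate — with Lozanovski{\u \i}'s factorization theorem (in the form $X' \odot X = L_1$ and the product formula for $X_F \odot X_G$) supplying the duality that keeps all estimates two-sided.
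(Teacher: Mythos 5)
Your overall scaffolding is right — two inclusions, the soft one from the generalized Young inequality, the hard one from a near-optimizing test function — and the soft inclusion is essentially the paper's argument (use $G(st)\leqslant F(s)+(G\ominus F)(t)$, the ideal property, and the norm–modular relations \eqref{EQ : norm-modular relation 2}–\eqref{EQ : norm-modular relation 3}). But the hard inclusion, as you describe it, has a genuine gap exactly where the real work is. You write \enquote{test $M_f$ against $g\approx s(\tau)\chi_E$ \dots\ scaled so that $\|g\|_{X_F}\lesssim 1$} and then unwind. That scaling is not available a priori: the pointwise near-optimizer $g$ (taking $g=\sum_n b_n\mathbf{1}_{A_n}$ with $G(a_nb_n)=(G\ominus_a F)(a_n)+F(b_n)$) is bounded by $a<b_F$, but its $X_F$-norm is uncontrolled when the support is large, and the whole point of the paper's proof is a bootstrap that circumvents this. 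Concretely: first one partitions $\mathbb{I}$ into pieces $\Delta_n$ small enough that $\|\mathbf{1}_{\Delta_n}\|_{X_F}$ is controlled (this is precisely where niceness, i.e.\ the interplay of $\psi_X(0^+)$, $b_F$, $b_G$, decides whether such a partition exists); on each piece the identity $G(fg)=(G\ominus_a F)(f)+F(g)$ yields $\mathcal{M}_F(g\mathbf{1}_{\Delta_n})\leqslant\mathcal{M}_G(fg\mathbf{1}_{\Delta_n})\leqslant\|fg\mathbf{1}_{\Delta_n}\|_{X_G}\leqslant 1/2$; then one inducts on the partial sums $g_n=\sum_{i\leqslant n}g\mathbf{1}_{\Delta_i}$, using $\mathcal{M}_F(g_{n-1})\leqslant 1/2$ to get $\mathcal{M}_F(g_n)\leqslant 1$, hence $\|g_n\|_{X_F}\leqslant 1$, and then the H{\"o}lder–Rogers inequality to improve back to $\mathcal{M}_F(g_n)\leqslant 1/2$; finally Fatou gives $\|g\|_{X_F}\leqslant 1$. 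Your \enquote{reassembling the slices with the weight that the fundamental function attaches to them} is not how the $X_G$-norm behaves under the Calder{\'o}n–Lozanovski{\u\i} construction, and there is no duality step — Lozanovski{\u\i}'s factorization and the product formula for $X_F\odot X_G$ are used only for Theorem~B, not here.

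A smaller point: the \enquote{only if} of (1) is much simpler than you anticipate. When the triple fails to be nice one has $F$ finite, $G$ jumping to infinity and $\psi_X(0^+)>0$; then $(G\ominus F)(t)=\infty$ for all $t>0$, so $X_{G\ominus F}=\{0\}$, while $\psi_X(0^+)>0$ forces $X_F\hookrightarrow L_\infty$ and hence $\mathbf{1}_{[0,1]}\in M(X_F,X_G)$, so the two spaces differ without constructing any delicate counterexample. The not-nice and sequence-space cases of (2) are then handled by essentially the same bootstrap with $G\ominus_1 F$ in place of $G\ominus_a F$, the normalization $F(1)=G(1)=1$, $\psi_X(0^+)=1$ (resp.\ $\psi_X(1)=1$), and the observation that $X\hookrightarrow L_\infty$ makes the partition into $\Delta_n$ trivial ($\Delta_n=[n-1,n]$, resp.\ $\Delta_n=\{n\}$). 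So: right shape, but the inductive modular bootstrap is the missing idea without which the hard inclusion does not close.
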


	In particular, since triples $(X,F,G)$, where $X$ is a rearrangement invariant space on $\mathbb{I} = [0,1]$ such that $X \neq L_{\infty}[0,1]$,
	are always nice, so (1) corresponds exactly with Kolwicz, Le{\' s}nik and Maligranda's conjecture (cf. Remark~\ref{REMARK : not nice on [0,1]}).
	
	Inspired by \cite{KLM14}, we will use these results to provide a final picture on the factorization of Calder{\' o}n--Lozanovski{\u \i} spaces.
	This is, in some sense, one of the most natural applications of knowledge about the space of pointwise multipliers.
	The idea behind the factorization problem for Calder{\' o}n--Lozanovski{\u \i} spaces (which goes back to Lozanovski{\u \i}'s classical result;
	see Section~\ref{SUBSECTION : Factorization} for more details) is to ask under what assumptions each multiplication operator $M_f$ acting from
	$L_{\infty}$ into $X_G$ admits a factorization
	\begin{equation*}
		\begin{tikzcd}
			L_{\infty} \arrow[rd, "M_g"', dashed] \arrow[rr, "M_f"] &                      & X_G \\
			& X_F \arrow[ru, "M_h"', dashed] &  
		\end{tikzcd}
	\end{equation*}
	(which, of course, means that $M_f = M_{gh} = M_g \circ M_h$) with
	\begin{equation*}
		\lVert M_g \colon L_{\infty} \rightarrow X_F \rVert \, \lVert M_h \colon X_F \rightarrow X_G \rVert \approx \lVert M_f \colon L_{\infty} \rightarrow X_G \rVert.
	\end{equation*}
	Equivalently, after translating factorization into a more popular dialect of \enquote{spaces}, we ask whether
	\begin{equation} \label{EQ : factorization more complicated}
		M(L_{\infty},X_F) \odot M(X_F,X_G) = M(L_{\infty},X_G).
	\end{equation}
	Here, by $M(L_{\infty},X_F) \odot M(X_F,X_G)$ we mean the pointwise product space, that is, a vector space of all measurable functions $f$
	that can be written as a pointwise product $gh$ with $g \in M(L_{\infty},X_F)$ and $h \in M(X_F,X_G)$.
	When endowed with the functional
	\begin{equation*}
		f \mapsto \inf \{ \lVert M_g \colon L_{\infty} \rightarrow X_F \rVert \, \lVert M_h \colon X_F \rightarrow X_G \rVert \},
	\end{equation*}
	where the infimum is taken over all decompositions $f = gh$, it becomes a quasi-Banach function space (see Section~\ref{SUBSECTION : Products} for further discussion).
	Actually, once we realize that $M(L_{\infty},X_F) = X_F$ and $M(L_{\infty},X_G) = X_G$, we can rewrite \eqref{EQ : factorization more complicated}
	in a simpler form
	\begin{equation}
		X_F \odot M(X_F,X_G) = X_G.
	\end{equation}
	This is the quality that is usually meant when talking about factorization of function spaces.
	Our overall result in this direction is as follows.

	\begin{theoremletters}[Theorem~\ref{THEOREM : Factorization of CL}] \label{THEOREM B}
		{\it Let $X$ be a rearrangement invariant space.}
		\begin{itemize}
			\item[(1)] {\it Suppose that $X \neq L_{\infty}$. Then the factorization $X_F \odot M(X_F,X_G) = X_G$ holds if,
			and only if, $F^{-1}(G \ominus F)^{-1} \approx G^{-1}$ (this equivalence should be understood in the appropriate way).}
			\item[(2)] {\it Suppose that $X = L_{\infty}$. Then $X_F \odot M(X_F,X_G) = X_G$ regardless of the functions $F$ and $G$.}
		\end{itemize}
	\end{theoremletters}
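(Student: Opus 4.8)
The plan is to combine Theorem~A, which already identifies $M(X_F,X_G)$ as a Calder{\' o}n--Lozanovski{\u \i} space, with an explicit factorisation of the functions of $X_G$, detecting the obstruction by testing on characteristic functions. By Theorem~A one has $M(X_F,X_G)=X_{G\ominus F}$ when $(X,F,G)$ is nice and $M(X_F,X_G)=X_{G\ominus_1 F}$ otherwise, so the factorisation $X_F\odot M(X_F,X_G)=X_G$ literally says $X_F\odot X_{G\ominus F}=X_G$ (in the non-nice case one reads $G\ominus_1 F$ throughout; one checks that $G\ominus F$ and $G\ominus_1 F$ have generalised inverses agreeing, up to constants, on the set of arguments that will matter). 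One inclusion is for free: $g\in X_F$ and $h\in M(X_F,X_G)$ force $gh=M_h g\in X_G$, so $X_F\odot X_{G\ominus F}\hookrightarrow X_G$ always. Furthermore, substituting $s=F^{-1}(u)$ in the Young-type inequality $G(sv)\leqslant F(s)+(G\ominus F)(v)$ and using the concavity of $G^{-1}$ yields $F^{-1}(u)\,(G\ominus F)^{-1}(u)\leqslant 2\,G^{-1}(u)$ for every $u\geqslant0$. Hence the hypothesis of part~(1) really amounts to the single estimate $G^{-1}(u)\lesssim F^{-1}(u)\,(G\ominus F)^{-1}(u)$ on the relevant set of arguments, and what has to be proved is that this estimate is equivalent to the reverse inclusion $X_G\hookrightarrow X_F\odot X_{G\ominus F}$.

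For the sufficiency in part~(1) I would assume the estimate and factor a given $0\leqslant f\in X_G$ with $\norm{f}_{X_G}\leqslant1$ by putting $w:=G(f)$, $g:=F^{-1}(w)$ and $h:=(G\ominus F)^{-1}(w)$, all inverses generalised. Then $F(g)\leqslant w$ and $(G\ominus F)(h)\leqslant w$, so $\norm{g}_{X_F}\leqslant1$ and $\norm{h}_{X_{G\ominus F}}\leqslant1$ because $\norm{w}_X=\norm{G(f)}_X\leqslant1$; and pointwise $gh=F^{-1}(w)\,(G\ominus F)^{-1}(w)\approx G^{-1}(G(f))\geqslant f$, while $gh\leqslant 2G^{-1}(G(f))$ always. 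Replacing $g$ by $g\cdot f/(gh)$, a correction by a factor in $(0,C]$, turns this into an exact factorisation $f=g'h$ with $\norm{g'}_{X_F}\norm{h}_{X_{G\ominus F}}\lesssim1$, which is exactly $X_G\hookrightarrow X_F\odot X_{G\ominus F}$. The delicate --- and, I expect, hardest --- point is that $w=G(f)$ need not land in the set on which the estimate was assumed; this is reconciled by peeling off the part of $f$ over which $G(f)$ leaves the good set (the bounded part of $f$ when $\mathbb{I}=[0,1]$, the small part when $\mathbb{I}=\mathbb{N}$), the remainder lying in every Calder{\' o}n--Lozanovski{\u \i} space over $X$ and so doing no harm, whereas on $\mathbb{I}=[0,\infty)$ the estimate is genuinely needed on all of $(0,\infty)$. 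Pinning down this set in each of the three cases is precisely the content of reading $\approx$ ``in the appropriate way''.

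For the necessity in part~(1) I would test on a set $E$ of measure $s$. Using $\norm{\chi_E}_{X_\varphi}=1/\varphi^{-1}\!\bigl(1/\phi_X(s)\bigr)$, where $\phi_X$ is the fundamental function of $X$, together with the standard equivalence $\norm{\chi_E}_{X_F\odot X_{G\ominus F}}\approx\norm{\chi_E}_{X_F}\,\norm{\chi_E}_{X_{G\ominus F}}$ (the fundamental function of a pointwise product of Calder{\' o}n--Lozanovski{\u \i} spaces is, up to constants, the product of the fundamental functions), comparing the two sides of the assumed identity on $\chi_E$ gives $F^{-1}(t)\,(G\ominus F)^{-1}(t)\approx G^{-1}(t)$ for every $t$ in the range of $s\mapsto1/\phi_X(s)$ --- exactly the set meant by ``the appropriate way''. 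This is the only place where $X\neq L_\infty$ enters: it forces $\phi_X$ to be non-constant, so the range is non-degenerate and the estimate non-vacuous. Part~(2) is then immediate: if $X=L_\infty$ then $(L_\infty)_\varphi=L_\infty$ with equivalent norms for every Young function $\varphi$ (its norm equals $\norm{\cdot}_\infty/\varphi^{-1}(1)$), so $M(X_F,X_G)=M(L_\infty,L_\infty)=L_\infty$ and $X_F\odot M(X_F,X_G)=L_\infty\odot L_\infty=L_\infty=X_G$ regardless of $F$ and $G$; consistently, for $X=L_\infty$ the fundamental function is constant, the range above collapses to a single point, and the condition of part~(1) becomes automatic.
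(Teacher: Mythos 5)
Your overall plan matches the paper's: invoke Theorem~A to rewrite $M(X_F,X_G)$ as a Calder\'on--Lozanovski{\u \i} space, observe that the inclusion $X_F \odot M(X_F,X_G) \hookrightarrow X_G$ and the estimate $F^{-1}(G\ominus F)^{-1} \preccurlyeq G^{-1}$ are both automatic (O'Neil / generalized Young inequality), and then settle the remaining two implications by comparing fundamental functions (necessity) and by a direct factorization (sufficiency). The paper runs this scheme by citing Lemmas~\ref{LEMMA : 1}, \ref{LEMMA : 2} and \ref{LEMMA : 3} (the first imported from \cite{KLM12}, \cite{KLM14}; the second and third proved in Section~\ref{SUBSECTION : Products of CL spaces}), whereas you re-derive their content inline. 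Your test on $\chi_E$ is exactly the mechanism inside Lemma~\ref{LEMMA : 2} (scaled indicators and \eqref{EQ : fundamental function of X_F}, \eqref{EQ : fundamental function of product}), your explicit construction $w=G(f)$, $g=F^{-1}(w)$, $h=(G\ominus F)^{-1}(w)$ is the classical proof of what the paper imports as Lemma~\ref{LEMMA : 1}, and the "peeling off" you flag as delicate is precisely the case analysis (small/large/all arguments according to $L_\infty\hookrightarrow X$ or $X\hookrightarrow L_\infty$) that that lemma encapsulates.

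One claim is actually wrong as stated. You assert that in the non-nice case one may pass between $G\ominus F$ and $G\ominus_1 F$ because their generalized inverses ``agree, up to constants, on the set of arguments that will matter.'' When $(X,F,G)$ fails to be nice one has $b_F=\infty$ and $b_G<\infty$, so $G\ominus F \equiv +\infty$ on $(0,\infty)$ and $(G\ominus F)^{-1}\equiv 0$, whereas $(G\ominus_1 F)^{-1}$ is a genuine nontrivial inverse; the two are not comparable in any range. The fix is simply to abandon the comparison and, as the paper does in condition~(4) of Theorem~\ref{THEOREM : Factorization of CL}, work with $G\ominus_1 F$ from the start in that case, using the truncated Young inequality \eqref{EQ : GYI01} (valid for $0\leqslant s,t\leqslant 1$, enough because $X\hookrightarrow L_\infty$ forces all the relevant function values to be bounded by $1$ after normalization) and the corresponding small-argument version of the comparison of inverses. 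Modulo that correction and fleshing out the peel-off, your argument aligns with the paper's.
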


	Theorems~\ref{THEOREM A} and \ref{THEOREM B} extend and complete all previously known results in this area (see, for example,
	by Ando \cite{And60}; Dankert \cite{Dan74}; Djakov and Ramanujan \cite{DR00}; Kolwicz, Le{\' s}nik and Maligranda \cite{KLM12}, \cite{KLM14};
	Le{\' s}nik and Tomaszewski \cite{LT17}; Maligranda and Nakai \cite{MN10}; Maligranda and Persson \cite{MP89}; O'Neil \cite{ONe65};
	Zabre{\u \i}ko and Ruticki{\u \i} \cite{ZR67}).
	
	That aside, it seems to us that there is one more, slightly obscure, thing about Theorems~\ref{THEOREM A} and \ref{THEOREM B}
	that anyway encompasses aesthetic imperative.
	Their formulations are simply transparent, which quite ruthlessly corresponds with previous results.
	\newline
	
	\noindent
	1.e.\,\,{\bf Key ideas.}
	A few things that could be considered the most innovative in our approach should probably be clearly highlighted.
	Let's do this now.
	
	$\bigstar$ Contrary to the results so far, we are not trying to find some conditions on the Young functions $F$ and $G$
	that guarantee that the space $M(X_F,X_G)$ is again the Calder{\' o}n--Lozanovski{\u \i} space.
	Instead, we simply want to describe the space $M(X_F,X_G)$.
	Anyhow, the fact that $M(X_F,X_G)$ is indeed another Calder{\' o}n--Lozanovski{\u \i} space can be considered a fortunate coincidence.
	
	$\bigstar$ We heavily use the machinery of generalized Young's conjugate functions.
	This idea is deeply rooted in the recent work of Le{\' s}nik and the second-named author \cite{LT17} and \cite{LT21},
	which in turn draw inspiration from the pioneering paper of Djakov and Ramanujan \cite{DR00}.
	
	$\bigstar$ While working on factorization, we are not interested in the factorization of the form $X_F \odot X_H = X_G$.
	Instead, we want to know when $X_F \odot M(X_F,X_G) = X_G$.
	This is only apparently weaker.
	In fact, if $X_F \odot X_H = X_G$ then $X_H \hookrightarrow M(X_F,X_G)$.
	This simple observation, combined with the knowledge that $M(X_F,X_G) = X_{G \ominus F}$, leads to a factorization problem
	of the form $X_F \odot X_{G \ominus F} = X_G$.
	This is somehow a little easier than the initial problem.
	
	$\bigstar$ We do not need any separability assumptions imposed on the space $X$.
	This is probably quite remarkable, because this assumption has been present, in one form or another, in almost all previous results.
	This also clearly distinguishes our case from the case of Orlicz spaces (after all, $L_F = (L_1)_F$ and $L_1$ is separable).
	
	$\bigstar$ The formalism we propose supports all three separable measure spaces, namely, $\mathbb{I} = [0,1]$ or $\mathbb{I} = [0,\infty)$
	with the Lebesgue measure $m$ and $\mathbb{I} = \mathbb{N}$ with the counting measure $\#$, simultaneously.
	From the perspective of Luxemburg's representation theorem, that's all one could ever expect in the realm of rearrangement
	invariant spaces.
	\newline
	
	\noindent
	1.f.\,\,{\bf Outline.}
	Let us now briefly describe the organization of this work.
	Overall, the paper is divided into six sections.
	
	The first section after Introduction, that is, Section~\ref{SECTION : Toolbox}, is of a preliminary nature.
	Here we will recollect the needful background and provide a handful of useful facts.
	We will pay special attention to the Calder{\' o}n--Lozanovski{\u \i} construction (Section~\ref{SUBSECTION : Calderon construction}),
	without forgetting about the construction of the space of pointwise multipliers (Section~\ref{SUBSECTION : Multipliers})
	and pointwise products (Section~\ref{SUBSECTION : Products}).
	
	Next part, that is, Section~\ref{SECTION : Main results}, should be considered as the main part of this work.
	Apart from a brief discussion about the generalized Young conjugate (Section~\ref{SUBSECTION : Young conjugate}),
	it contains essentially two results, which are Theorem~\ref{THEOREM : PM between CL} and Theorem~\ref{THEOREM : main thm not nice}.
	They provide necessary and sufficient condition for the equality $M(X_F, X_G) = X_{G \ominus F}$ to hold.
	We will conclude this section with some specific examples to illustrate our general results (see Examples~\ref{EXAMPLE : rachuneczki 1},
	\ref{EXAMPLE : rachuneczki 2}, \ref{EXAMPLE : Maligranda i Persson} and \ref{EXAMPLE : MP jak skacze}). 
	
	Section~\ref{SECTION : Applications} is devoted to applications.
	After a short introduction to the factorization problem (Section~\ref{SUBSECTION : Factorization}), we present several technical lemmas that
	will prove necessary later (Section~\ref{SUBSECTION : Products of CL spaces}). 
	The culmination is Section~\ref{SUBSECTION : On factorization}, where we prove factorization theorem for Calder{\' o}n--Lozanovski{\u \i} spaces
	(precisely, see Theorem~\ref{THEOREM : Factorization of CL}).
	
	Lastly, in Section~\ref{SECTION : Open problems}, we have collected several open problems whose potential solution
	would significantly extend and complement the results obtained here.
	\newline
	
	\noindent
	1.g.\,\,{\bf Acknowledgments.}
	The results presented here confirm certain beliefs that the second-named author acquired after completing his doctoral dissertation \cite{Tom21}
	(written under supervision of Professors Karol Le{\' s}nik and Ryszard P{\l}uciennik).
	
	\section{{\bf Statements and Declarations}}
	
	\noindent
	{\bf Funding.} Our research was carried out at the Pozna{\' n} University of Technology (grant number 0213/SBAD/0120).
	\newline
	
	\noindent
	{\bf Conflict of Interest.}
	The authors declare that they have no known competing financial interest or personal relationship that could have appeared
	to influence the work reported in this paper.
	\newline
	
	\noindent
	{\bf Data Availibility.}
	All data generated or analysed during this study are included in this article.
	
	\section{{\bf Toolbox}} \label{SECTION : Toolbox}
	
	Our notation and terminology is rather standard and is in-line with what can be found in the classical monographs
	by Bennett and Sharpley \cite{BS88}, and by Lindenstrauss and Tzafriri \cite{LT79}.
	Below, we provide in detail the most important definitions, terminology and some essential facts which we will use hereinafter.
	This is dictated not only by the reader's convenience, but also by the need to organize some of the material and adapt it
	to our purposes.
	We have also made some effort to ensure that this work is largely self-contained.
	Other concepts omitted here will be introduced where necessary.
	
	\subsection{Banach ideal spaces}
	
	For a complete and $\sigma$-finite measure space $(\Omega,\Sigma,\mu)$, let $L_0(\Omega)$, briefly just $L_0$ if the context leaves no ambiguity,
	be the set consisting of all equivalence classes, by the Kolmogorov quotient modulo equality almost everywhere, of real- or complex-valued measurable
	functions defined on $\Sigma$.
	As usual, we consider the space $L_0$ with the topology of convergence in measure on sets of finite measure, that is, the topology of local convergence in measure.
	This makes $L_0$ an $F$-space.
	
	A Banach space $X$ is called a {\bf Banach ideal space} (using another common nomenclature, a {\bf K{\" o}the space} or a {\bf Banach function lattice})
	if the following two conditions hold:
	\begin{itemize}
		\item[(1)] $X$ is a linear subspace of $L_0$;
		\item[(2)] if $\abs{f(\omega)} \leqslant \abs{g(\omega)}$ almost everywhere on $\Omega$, with $f$ measurable
		and $g \in X$, then also $f \in X$ and $\norm{f} \leqslant \norm{g}$ (the so-called {\bf ideal property}).
	\end{itemize}
	
	Further, recall that a Banach ideal space $X$ is said to be {\bf rearrangement invariant} (or {\bf symmetric}) if additionally:
	\begin{itemize}
		\item[(3)] for any two measurable functions, say $f$ and $g$, with
		$\mu\left( \left\{ \omega \in \Omega \colon \abs{f(\omega)} > \lambda \right\} \right) = \mu\left( \left\{ \omega \in \Omega \colon \abs{g(\omega)} > \lambda \right\} \right)$
		for all $\lambda \geqslant 0$ and $f \in X$, it follows that $g \in X$ and $\norm{f} = \norm{g}$;
		\item[(4)] for any increasing sequence $\{ f_n \}_{n=1}^{\infty}$ of non-negative functions from $X$ converging almost
		everywhere to $f$ such that $\sup \{ \norm{f_n} \colon n \in \mathbb{N} \}$ is finite, it follows that $f$ belongs to
		$X$ and $\norm{f} = \sup \{ \norm{f_n} \colon n \in \mathbb{N} \}$
		(this is the so-called {\bf Fatou property}\footnote{Roughly speaking, this means that the closed unit ball of $X$
		is also closed with respect to the topology of local convergence in measure.}).
	\end{itemize}
	
	Using the language of the interpolation theory, rearrangement invariant spaces defined as above are exactly interpolation spaces with respect to the couple $(L_{\infty}, L_1)$.
	Evidently, condition (3) ensures that $\norm{f} = \norm{f^{\star}}$, where $f^{\star}$ is the {\bf non-increasing rearrangement} of $f$, that is,
	$f^{\star}(t) \coloneqq \inf \left\{ \lambda > 0 \colon \mu\left( \left\{ \omega \in \Omega \colon \abs{f(\omega)} > \lambda \right\} \right) \leqslant t \right\}$
	for $t \geqslant 0$.
	Moreover, thanks to {\bf Luxemburg's representation} theorem (see \cite[Theorem~4.10, p.~62]{BS88} and \cite[pp. 114--115]{LT79}),
	it is enough to consider rearrangement invariant spaces defined on one of the following three separable\footnote{It is well-known that the measure space
	$(\Omega,\Sigma,\mu)$ is separable if, and only if, $L_1(\Omega)$ is separable as a Banach space (see \cite[p.~137]{Zaa67}).} measure spaces:
	\begin{itemize}
		\item[$\bigstar$] the set of positive integers $\mathbb{I} = \mathbb{N}$ with the counting measure $\#$
		(in this situation we will talk about {\bf sequence spaces});
		\item[$\bigstar$] the unit interval $\mathbb{I} = [0,1]$ or the half-line $\mathbb{I} = [0,\infty)$ with the usual Lebesgue measure
		(with the convention that we will call them {\bf function spaces}).
	\end{itemize}
	Note that the most prolific classes of function spaces such as Lebesgue spaces, Orlicz spaces and Lorentz spaces are indeed rearrangement invariant.
	
	A function $f$ from a Banach ideal space $X$ is said to be {\bf order continuous}\footnote{Plainly, any function $f$ from $L_1(\Omega)$ is order continuous.
	Thus, this definition is nothing else, but Lebesgue's dominated convergence theorem under more abstract clothes.} if, for any sequence $\{ f_n \}_{n=1}^{\infty}$
	of positive functions that is bounded above by $\abs{f}$ and converges almost everywhere to zero, it follows that $\{ f_n \}_{n=1}^{\infty}$ is norm null sequence.
	By $X_o$ we denote a closed subspace of all order continuous functions from $X$.
	We will say that the space $X$ is {\bf order continuous} if $X = X_o$.
	Note that a Banach ideal space $X$ is order continuous if, and only if, it is separable (see \cite[Theorem~5.5, p.~27]{BS88}).
	Therefore, hereinafter, we will use these terms interchangeably.
	
	By the {\bf K{\" o}the dual} (or, the {\bf associated space}) $X^{\times}$ of a given Banach ideal space $X$ we understand here a vector space all measurable functions
	$f$ such that $fg$ is integrable for all $g \in X$ equipped with the norm $\norm{f}_{X^{\times}} \coloneqq \sup \{ \norm{fg}_{L_1} \colon \norm{g}_X \leqslant 1 \}$.
	Recall that $X \equiv X^{\times \times}$ if, and only if, the norm in $X$ has the Fatou property.
	Moreover, for any given order continuous Banach ideal space $X$, its K{\" o}the dual $X^{\times}$ can be naturally identified with the topological dual $X^{*}$,
	that is, the space of all continuous linear forms on $X$ (see \cite[Corollary~4.3, p.~23]{BS88}).
	Basically for this reason, a Banach ideal space $X$ is reflexive if, and only if, both $X$ and $X^{\times}$ are order continuous
	(see \cite[Corollary~4.4., p.~23]{BS88}).
	
	Recall that the {\bf fundamental function} $\psi_X \colon \mathbb{I} \rightarrow [0,\infty)$ of a rearrangement invariant space $X$
	is defined by the formula $\psi_X(t) \coloneqq \lVert {\bf 1}_{[0,t)} \rVert_{X}$ for $t \in \mathbb{I}$.
	We will say that the fundamental function $\psi_X$ {\bf does not vanish at zero} if $\psi_X(0^{+}) \coloneqq \lim_{t \rightarrow 0^{+}} \psi_X(t) > 0$.
	It is straightforward to see that $\psi_X$ does not vanish at zero if, and only if, $X$ is a subspace of $L_{\infty}$ and
	if, and only if, the ideal $X_o$ is trivial (see, for example, \cite[Theorem~B]{KT17}).
	
	We ought to mention that the embedding $X \subset Y$ between two Banach ideal spaces is {\it always} continuous, that is,
	$\norm{\text{id} \colon X \rightarrow Y} = \sup \{ \norm{f}_Y \colon \norm{f}_X = 1 \}$ is finite.
	To duly emphasize this fact, we shall rather write $X \hookrightarrow Y$.
	Moreover, the symbol $X = Y$ indicates that the spaces $X$ and $Y$ are the same as vector spaces and their norms are equivalent.
	Plainly, $X = Y$ if, and only if, $X \hookrightarrow Y$ and $Y \hookrightarrow X$.
	Occasionally, we will write $X \equiv Y$, understanding that $X = Y$, but this time both norms are even equal.

	We refer to the books by Bennett and Sharpley \cite{BS88}, Brudny{\u \i} and Krugljak \cite{BK91}, and Lindenstrauss and Tzafriri \cite{LT79}
	for a comprehensive information about the theory of Banach ideal spaces and, in particular, rearrangement invariant spaces (see also \cite{RGMP16}).
	An inexhaustible source of information about order continuity is Wnuk's monograph \cite{Wn99}.
	To place the mentioned structures within the general framework of abstract Banach lattices and Riesz spaces we recommend taking a look at \cite{MN91}.
	
	\subsection{Calder{\' o}n--Lozanovski{\u \i} construction} \label{SUBSECTION : Calderon construction}
	
	Let us denote by $\mathcal{U}$ the set of all non-negative, concave and positively homogeneous functions
	$\varrho \colon [0,\infty) \times [0,\infty) \rightarrow [0,\infty]$ which vanish only at $(0,0)$. 
	For a function $\varrho$ from $\mathcal{U}$ and two Banach ideal spaces $X$ and $Y$, both defined on the same measure space,
	by the {\bf Calder{\' o}n--Lozanovski{\u \i} construction $\varrho(X,Y)$} (or just the {\bf Calder{\' o}n--Lozanovski{\u \i} space})
	we understand a vector space consisting of all measurable functions, say $f$, such that
	\begin{equation*}
		\abs{f(t)} \leqslant \lambda \varrho(\abs{g(t)},\abs{h(t)})
	\end{equation*}
	for some $\lambda > 0$ with $g \in \text{Ball}(X)$ and $h \in \text{Ball}(Y)$; the norm $\norm{f}_{\varrho(X,Y)}$ of a function $f$
	from $\varrho(X,Y)$ is defined as the infimum over all $\lambda > 0$ for which the above inequality holds.
	It is straightforward to see that
	\begin{equation*}
		\norm{f}_{\varrho(X,Y)} = \inf \, \, \max \left\{ \norm{g}_X, \norm{h}_Y \right\},
	\end{equation*}
	where the infimum is taken over all $g \in X$ and $h \in Y$ with $\abs{f(t)} \leqslant \varrho(\abs{g(t)},\abs{h(t)})$.
	This construction was introduced in the mid 60's by Alberto Calder{\' o}n \cite{Cal64} and later systematically developed by
	Grigorii Lozanovski{\u \i} in a series of papers (see, for example, \cite{Loz71}, \cite{Loz73} and \cite{Loz78}; cf. \cite{KL10} and \cite[Section~15]{Mal04}).
	It is inextricably linked to the interpolation theory, because $\varrho(\cdot,\cdot)$ is an interpolation functor for positive\footnote{We
	can move from positive to arbitrary linear operators by paying the price of assuming the Fatou property.} linear operators.
	Although, we will be not interested in this general construction {\it per se}, its unifying character is hard to overestimate.
	
	More precisely, when $\varrho(s,t) = tF^{-1}(s/t)$ for $s, t > 0$, where $F^{-1}$ is the right-continuous inverse\footnote{Recall
	that for a given Young function $F$ its right-continuous inverse $F^{-1} \colon [0,\infty] \rightarrow [0,\infty]$ is defined as
	follows $F^{-1}(s) \coloneqq \inf \{ t \geqslant 0 \colon F(t) > s\}$ for $0 \leqslant s < \infty$ and $F^{-1}(\infty) \coloneqq \lim_{s \rightarrow \infty} F^{-1}(s)$
	(with the convention that $\inf \{ \emptyset \} = \infty$).}
	of a Young function $F$, the corresponding Calder{\' o}n--Lozanovski{\u \i} space $\varrho(X,L_{\infty})$ is usually denoted by $X_F$
	and is sometimes called the {\bf generalized Orlicz space} (see \cite[Example~2, p.~178]{Mal04}).
	It is only a matter of simple calculations to be convinced that the space $X_F$ consists of those measurable functions $f$ such that
	$F(\abs{f(\cdot)}/\lambda)$ belongs to $X$ for some $\lambda > 0$ (see, for example, \cite[Example~2, p.~178]{Mal04}).
	Moreover,
	\begin{equation*}
		\norm{f}_{\varrho(X,L_{\infty})}
			= \norm{f}_{X_F}
			\coloneqq \inf \left\{ \lambda > 0 \colon \mathcal{M}_F(f/\lambda) \leqslant 1 \right\},
	\end{equation*}
	where $\mathcal{M}_F(f) \coloneqq \norm{F \left( \abs{f} \right)}_X$ is the {\bf modular}
	(pedantically speaking, $\mathcal{M}_F$ should be called a convex and left-continuous semi-modular).
	Here, we follow the convention that if $F(\abs{f}) \notin X$, then $\mathcal{M}_F(f) = \infty$.
	Fortunately, for our purposes, the voluminous theory of modular spaces boils down to the following three simple relations:
	\begin{equation} \label{EQ : norm-modular relation 1}
		\mathcal{M}_F(f) = 1 \quad \Longrightarrow \quad \norm{f}_{X_F} = 1;
	\end{equation}
	\begin{equation} \label{EQ : norm-modular relation 2}
		\norm{f}_{X_F} \leqslant 1 \quad \Longleftrightarrow \quad \mathcal{M}_F(f) \leqslant 1 \quad (\text{see \cite[Theorem~1.4(b), p.~9]{Mal04}});
	\end{equation}
	\begin{equation} \label{EQ : norm-modular relation 3}
		\norm{f}_{X_F} < 1 \quad \Longrightarrow \quad \mathcal{M}_F(f) \leqslant \norm{f}_{X_F} \quad (\text{see the proof of Theorem~3 in \cite{DR00}}).
	\end{equation}
	In general, none of the above implications can be reversed (for example, this can be done in \eqref{EQ : norm-modular relation 1} if,
	and only if, the space $X_F$ satisfies the so-called {\bf norm-modular condition}; see \cite[Remark~26]{KL10} and references given there).
	It is known that the construction $X_F$ inherits many\footnote{However, this is not always that obvious. For example, the space $X_F$ is
	separable provided the space $X$ is separable and $F$ satisfies the appropriately understood $\Delta_2$-condition. Let alone some geometric
	properties like rotundity or uniform convexity (much work in this direction was done between 1990 and 2010; see \cite{KL10} and their references).}
	properties of the space $X$.
	For example, if the space $X$ has the Fatou property or is rearrangement invariant then the same can be said about the space $X_F$.
	Note also that
	\begin{equation} \label{EQ : fundamental function of X_F}
		\psi_{X_F}(t) = \frac{1}{F^{-1}(1/\psi_X(t))}
	\end{equation}
	for $t \in \mathbb{I}$ (cf. \cite[Corollary~4, p.~58]{Mal04}).

	In particular, the space $\varrho(L_1,L_{\infty}) = (L_1)_F$ coincides, up to the equality of norms, with the familiar {\bf Orlicz space $L_F$},
	whilst the space $\varrho(\Lambda(w),L_{\infty}) = (\Lambda(w))_F$ coincide with the {\bf Orlicz--Lorentz space $\Lambda_{F}(w)$}
	(see Section~\ref{SUBSECTION : Orlicz--Lorentz} for more details).
	Evidently, if the Young function $F$ is just a power function, that is, $F(t) = t^p$ for some $1 \leqslant p < \infty$, then the Orlicz space $L_F$
	is nothing else but the classical Lebesgue space $L_p$.
	
	There are two more constructions that should be mentioned.
	In the case of power functions, that is, when $\varrho(s,t) = s^{1-\theta}t^{\theta}$ for some $0 \leqslant \theta \leqslant 1$,
	the space $\varrho(X,Y)$ coincide with the so-called {\bf Calder{\' o}n product} $X^{1-\theta}Y^{\theta}$ (see \cite{Cal64} and \cite[p.~176]{Ma89}).
	In particular, for $1 < p < \infty$, the {\bf $p$-convexification $X^{(p)}$} of $X$, is defined as
	\begin{equation*}
		X^{1/p}(L_{\infty})^{1-1/p} \equiv X^{(p)} \coloneqq \left\{ f \in L_0 \colon \abs{f}^p \in X \right\}
	\end{equation*}
	with $\norm{f}_{X^{(p)}} \coloneqq \norm{\abs{f}^p}^{1/p}_X$ (for more about $p$-convexification of Banach ideal spaces see, for example, \cite{MP89}).
	Let us add that the above construction $X^{(p)}$ makes sense for $0 < p < 1$, but then it is usually referred to as the {\bf $p$-concavification}.
	
	Much more information about Calder{\' o}n--Lozanovski{\u \i} spaces (but also modular spaces and, in particular, Orlicz spaces),
	their Banach space structure and connections with interpolation theory can be found in Maligranda's book \cite{Ma89}
	(see also \cite{BM05}, \cite{Mal04}, \cite{KL10}, \cite{RGMP16} and references therein).
	
	\subsection{Pointwise multipliers} \label{SUBSECTION : Multipliers}
	
	By the space of {\bf pointwise multipliers} $M(X,Y)$ between two Banach ideal spaces $X$ and $Y$ we understand a vector space
	\begin{equation*}
		M(X,Y) \coloneqq \left\{ f \in L_0 \colon fg \in Y \text{ for all } g \in X \right\}
	\end{equation*}
	furnished with the natural\footnote{After all, every function $f \in M(X,Y)$ induces the multiplication operator $M_f \colon X \rightarrow Y$
	given as $M_f \colon g \mapsto fg$ and, moreover, there holds $\norm{f}_{M(X,Y)} = \norm{M_f \colon X \rightarrow Y}$.}
	operator norm $\norm{f}_{M(X,Y)} \coloneqq \sup_{\norm{g}_X = 1} \norm{fg}_Y$.
	Cooked in this fashion, the space $M(X,Y)$ becomes a Banach ideal space itself (see \cite[Proposition~2]{MP89}).
	Note that the space $M(X,Y)$ is non-trivial if, and only if, $X \overset{\textit{\tiny locally}}{\hookrightarrow} Y$, that is,
	for any $f \in X$, and any set $A$ with positive but finite measure, it follows that $\norm{f {\bf 1}_A}_Y \leqslant C \norm{f {\bf 1}_A}_X$
	for some constant $C > 0$ dependent only(!) on $A$ (see \cite[Proposition~2.3]{KLM12}).
	Moreover, the space $M(X,Y)$ is a rearrangement invariant provided $X$ and $Y$ are rearrangement invariant too
	(see \cite[Theorem~2.2]{KLM12} and \cite[Lemma~4.3]{KT22}).
	To put this construction on familiar ground, let us observe that the space $M(X,L_1)$ coincide, up to the equality of norms,
	with the K{\" o}the dual $X^{\times}$ of $X$.
	This leads, among other things, to a general variant of {\bf H{\" o}lder--Rogers inequality}
	\begin{equation} \label{EQ : general Holder--Rogers}
		\norm{fg}_Y \leqslant \norm{f}_X \norm{g}_{M(X,Y)}.
	\end{equation}
	It is also straightforward to see that $M(X,X) \equiv L_{\infty}$ (see \cite[Theorem~1]{MP89}).
	Informally, the space $M(X,Y)$ may be regarded as a \enquote{pointwise quotient} of the space $Y$ by $X$.
	
	Using this construction one can, for example, provide a characterization of compact and weakly compact multiplication
	operators acting between two Banach ideal spaces or describe compact Fourier multipliers acting on Banach spaces of analytic functions (see \cite{KT22} for more).
	
	It is also worth mentioning that a lot has been said about pointwise multipliers acting on function spaces with some smoothness,
	like Besov--Sobolev--Triebel--Lizorkin' type spaces (see, for example, \cite{Sic99} and \cite{Tri03}; classical book on this topic is \cite{MS85}).
	
	There is an extensive literature devoted to this topic. We refer to
	\cite{And60}, \cite{Be96}, \cite{Ber23}, \cite{BL93}, \cite{CDSP08}, \cite{Cro69}, \cite{DSP10}, \cite{DR00}, \cite{KT22}, \cite{KLM12}, \cite{LT17}, \cite{LT21},
	\cite{MN10}, \cite{MP89}, \cite{Nak95}, \cite{Nak16}, \cite{ONe65}, \cite{OT72}, \cite{Ray92} and \cite{Sch10} (see also Nakai's survey \cite{Nak17} and references given there).
	
	\subsection{Pointwise products} \label{SUBSECTION : Products}
	
	For two Banach ideal spaces $X$ and $Y$, the {\bf pointwise product space $X \odot Y$} of $X$ and $Y$ is defined as
	\begin{equation*}
		X \odot Y \coloneqq \left\{ gh \colon g \in X \text{ and } h \in Y \right\},
	\end{equation*}
	and endowed with the \emph{quasi}\footnote{The quasi-norm is, to put it briefly, just the norm in which the $\triangle$-inequality holds but with a constant
	greater than 1 (see, for example, \cite{Kal03} and references therein for much more about quasi-norms and quasi-Banach spaces).}-norm
	\begin{equation}
		\norm{f}_{X \odot Y} \coloneqq \inf \bigl\{ \norm{g}_X \norm{h}_Y \colon f = gh,\, g \in X \text{ and } h \in Y \bigr\}.\label{norma_odot}
	\end{equation}
	The fact that $X \odot Y$ is a vector space is not entirely obvious but follows, in one way or another, from the ideal property of $X$ and $Y$.
	It seems noteworthy that if both spaces $X$ and $Y$ have the Fatou property, then the space $X \odot Y$ has the Fatou property as well.
	Similarly, the space $X \odot Y$ is rearrangement invariant as long as both spaces $X$ and $Y$ are rearrangement invariant.
	All this is essentially due to the fact that the product space $X \odot Y$ is isometric to the $\frac{1}{2}$-concavification
	of the Calder{\' o}n product $X^{1/2}Y^{1/2}$.
	Furthermore, for any two given rearrangement invariant spaces, say $X$ and $Y$, the following nice formula holds
	\begin{equation} \label{EQ : fundamental function of product}
		\psi_{X \odot Y}(t) = \psi_X(t) \psi_Y(t)
	\end{equation}
	for $t \in \mathbb{I}$.
	In other words, the fundamental function $\psi_{X \odot Y}$ of the product space $X \odot Y$ is just a product of fundamental function $\psi_X$
	of $X$ and $\psi_Y$ of $Y$ (see \cite[Theorem~2]{KLM12}).
	
	Plainly, taking pointwise product $X \odot Y$ seems somehow opposite to taking \enquote{pointwise quotient} $M(X,Y)$.
	For this reason, the problem whether $X \odot M(X,Y)$ is the same as $Y$ is not without significance.
	A generic example of this kind is {\bf Lozanovski{\u \i}'s factorization theorem}, which says that $X \odot M(X,L_1) = L_1$
	(we will come back to this problem in Section~\ref{SUBSECTION : On factorization} and say more about factorization).
	
	More about pointwise products and factorization can be found in
	\cite{Be96}, \cite{Bun87}, \cite{CS14}, \cite{CS17}, \cite{Gil81}, \cite{JR76}, \cite{KT24}, \cite{KLM14}, \cite{KLM19}, \cite{LT17}, \cite{LT21},
	\cite{LTJ80}, \cite{Mau74}, \cite{Nil85}, \cite{Rei81} and \cite{Sch10}.
	
	\section{{\bf Main results}} \label{SECTION : Main results}
	
	\subsection{Generalized Young conjugate} \label{SUBSECTION : Young conjugate}
	
	This section essentially can be seen as a spin-off from \cite{LT17}
	(see also \cite{And60}, \cite{Mau74}, \cite[Section~7]{KLM12}, \cite[pp.~77--78]{Mal04}, \cite[pp.~334--335]{MP89} and \cite{ONe65}).
	
	In what follows it will be convenient to write that a Young function $F$ {\bf jumps to infinity} if
	$b_F \coloneqq \sup \{ t \geqslant 0 \colon M(t) < \infty \}$ is finite.
	Otherwise, that is, when $b_M = \infty$, we will say that $M$ is {\bf finite}.
	
	\begin{definition}[Generalized Young conjugate] \label{DEF : generalized Young conjugate}
		{\it For a given two Young functions, say $F$ and $G$, we define the {\bf generalized Young conjugate $G \ominus F$}
		of $G$ with respect to $F$, in the following way
		\begin{equation} \label{EQ : Generalized Young conjugate}
			(G \ominus F)(t) \coloneq \sup\limits_{0 \leqslant s < b_F} \left\{ G(st) - F(s) \right\}
		\end{equation}
		for $t \geqslant 0$.
		Moreover, for $0 < a < b_F$, we define the function $G \ominus_a F$, which can be seen as a truncated version of $G \ominus F$,
		as follows
		\begin{equation} \label{EQ : truncated GYCF}
			(G \ominus_a F)(t) \coloneq \sup_{0 \leqslant s \leqslant a} \left\{ G(st) - F(s) \right\}
		\end{equation}
		for $t \geqslant 0$.}
	\end{definition}

	One can show that both functions $G \ominus F$ and $G \ominus_a F$ are again Young functions (see, for example, \cite[Theorem~4]{And60} and \cite[Lemma~2]{LT17}).
	Moreover, for $t \geqslant 0$, there holds
	\begin{equation} \label{EQ : granica dopelniajacych}
		\lim_{a \rightarrow b_F^{-}} (G \ominus_a F)(t) = (G \ominus F)(t).
	\end{equation}
	It is also clear that the so-called {\bf generalized Young inequality} holds, that is,
	for any two Young functions $F$ and $G$,
	\begin{equation} \label{EQ : generalized Young inequality}
		G(st) \leqslant (G \ominus_a F)(t) + F(s),
	\end{equation}
	where $t \geqslant 0$ and $0 \leqslant s \leqslant a$.
	
	The idea behind \eqref{EQ : Generalized Young conjugate} and \eqref{EQ : truncated GYCF} goes back to the work of Ando \cite{And60} and Maurey \cite{Mau74}.
	Roughly speaking, both constructions \eqref{EQ : Generalized Young conjugate} and \eqref{EQ : truncated GYCF} are intended to properly generalize
	the K{\" o}the duality theory of Orlicz spaces.
	To see this, recall that the K{\" o}the dual $(L_M)^{\times}$ of the Orlicz space $L_F$ coincides, up to the equivalence of norms,
	with another Orlicz space $L_{F^{*}}$, where the function $F^{*}$ is defined as $F^{*}(t) \coloneqq \sup_{s > 0} \left\{ st - F(s) \right\}$
	for $t \geqslant 0$, and is customarily called the {\bf Young conjugate} of $M$ (all of this is classic; see \cite[p.~175]{And60} and
	\cite[Definition~1.5]{ONe65}; cf. \cite[Chapters~8 and 9]{Ma89}, \cite[Chapter~15]{RGMP16} and their references).
	In other words, we have
	\begin{equation*}
		(L_F)^{\times} = M(L_F,L_1) = L_{F^*} = L_{\text{id} \ominus F}.
	\end{equation*}
	Thus, in general, it is perfectly natural to suspect that
	\begin{equation} \label{EQ : Faktoryzacja Orliczy LT17}
		M(L_F,L_G) = L_{G \ominus F}.
	\end{equation}
	In fact, after many partial results, this conjecture was finally confirmed in full generality by Karol Le{\' s}nik and the second-named author in \cite{LT17}
	(see also \cite{LT21} for further generalization to the setting of Musielak--Orlicz spaces).
	
	Note also that \eqref{EQ : Faktoryzacja Orliczy LT17} in its most rudimentary form, that is, when both Young functions $M$ and $N$ are just a power functions,
	looks as follows: For $1 \leqslant q < p < \infty$, we have $M(L_p,L_q) = L_r$ with $1/r = 1/q - 1/p$.
	
	\subsection{Pointwise multipliers of Calder{\' o}n--Lozanovski{\u \i} spaces} \label{SUBSECTION : PMCLS}
	
	Before we go any further, let us introduce some notation that will make our live a little more bearable.

	\begin{notation}[Nice triple] \label{NOTATION : Nice triple}
		{\it Let $X$ be a rearrangement invariant function space.
		Further, let $F$ and $G$ be two Young functions.
		We will say that the triple $(X,F,G)$ is {\bf nice} if the following three conditions:}
		\begin{itemize}
			\item[($1$)] {\it the fundamental function $\psi_X$ of the space $X$ does not vanish at zero;}
			\item[($2$)] {\it the function $F$ is finite;}
			\item[($3$)] {\it the function $G$ jumps to infinity,}
		\end{itemize}
		{\it do not meet simultaneously.}
	\end{notation}
	
	\begin{remark} \label{REMARK : not nice on [0,1]}
		Unwinding the above definition just a little bit, it is straightforward to see that the triple $(X,F,G)$,
		where $X$ is a rearrangement invariant space $X$ defined on $\mathbb{I} = [0,1]$, is not nice then $X = X_F = X_G = L_{\infty}[0,1]$
		regardless of the functions $F$ and $G$. 
	\end{remark}

	After this modest preparation, we are finally ready to prove the following
	
	\begin{theorem}[Pointwise multipliers of Calder{\' o}n--Lozanovski{\u \i} spaces] \label{THEOREM : PM between CL}
		{\it Let $X$ be a rearrangement invariant function space.
		Further, let $F$ and $G$ be two Young functions.
		Then $M(X_F, X_G) = X_{G \ominus F}$ if, and only if, the triple $(X,F,G)$ is \enquote{nice} (see Notation~\ref{NOTATION : Nice triple} for clarification).}
	\end{theorem}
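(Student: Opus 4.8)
The plan is to prove the two implications separately, with the "if" direction (niceness $\Rightarrow$ equality) being the substantial one. For the easy "only if" direction, I would contrapose: suppose the triple is \emph{not} nice, so $\psi_X(0^+)>0$, $F$ is finite, and $G$ jumps to infinity. By the remark recorded after the definition of order continuity, $\psi_X(0^+)>0$ forces $X\hookrightarrow L_\infty$; since $X$ is a rearrangement invariant \emph{function} space this is quite restrictive. I would then exhibit a function witnessing $M(X_F,X_G)\neq X_{G\ominus F}$: because $G$ jumps to infinity while $F$ is finite, $G\ominus F$ typically jumps to infinity too, but the presence of the $L_\infty$-component in $X$ interacts with the construction $X_{G\ominus F}$ in a way that makes $M(X_F,X_G)$ strictly larger (or the norms inequivalent). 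Concretely I expect to compare fundamental functions via \eqref{EQ : fundamental function of X_F} and \eqref{EQ : fundamental function of product}, or to build a single bad function on a set of small measure where the jump of $G$ bites but the jump of $G\ominus F$ does not reflect it correctly.

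For the main "if" direction I would prove the two inclusions $X_{G\ominus F}\hookrightarrow M(X_F,X_G)$ and $M(X_F,X_G)\hookrightarrow X_{G\ominus F}$. The first inclusion is the softer one and should follow from the generalized Young inequality \eqref{EQ : generalized Young inequality}: if $f\in X_{G\ominus F}$ and $g\in X_F$, then pointwise $G(|fg|/\lambda)\le (G\ominus_a F)(|f|/\lambda) + F(|g|/\lambda)$ for suitable $a,\lambda$, and taking $\|\cdot\|_X$ and using \eqref{EQ : norm-modular relation 2} together with the modular–norm relations \eqref{EQ : norm-modular relation 1}–\eqref{EQ : norm-modular relation 3} gives $fg\in X_G$ with controlled norm. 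One has to be a little careful passing from the truncated conjugate $G\ominus_a F$ to $G\ominus F$ using \eqref{EQ : granica dopelniajacych}, and careful that the niceness hypothesis is what makes this passage lossless rather than only giving the truncated space $X_{G\ominus_1 F}$.

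The hard part will be the reverse inclusion $M(X_F,X_G)\hookrightarrow X_{G\ominus F}$: given $f$ such that $fg\in X_G$ for every $g\in X_F$, one must produce, for the "right" $\lambda$, a single test function $g$ that simultaneously saturates the supremum defining $(G\ominus F)(|f(t)|/\lambda)$ at every point $t$. The natural candidate is $g(t)=s(t)$ where $s(t)$ is (an approximation of) the argmax in $\sup_{s<b_F}\{G(s|f(t)|/\lambda)-F(s)\}$; the issue is showing this selection $g$ actually lies in $X_F$ with controlled modular, and this is exactly where the interplay between $X$, $F$ and $G$ — the content flagged in the abstract — becomes delicate. I would handle it by a layer-cake / exhaustion argument: decompose $\mathbb I$ according to the size of $|f|$, on each layer choose a near-optimal constant $s$, control $\mathcal M_F$ of the resulting step function using that $X$ is rearrangement invariant and (crucially) that niceness prevents the degenerate scenario where $X$ behaves like $L_\infty$, $F$ is finite and $G$ jumps — the precise configuration in which the selection would force $g\notin X_F$. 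A measurable-selection argument plus a diagonalization over the truncations $G\ominus_a F$, combined with the Fatou property of $X$ to pass to the limit $a\to b_F^-$, should close the estimate. I expect the endpoint bookkeeping (the case $b_F<\infty$, i.e. $F$ jumps to infinity, versus $b_F=\infty$; and the behaviour at $t$ where $F(s)=\infty$) to be where most of the genuine work sits, and where sequence spaces have to be excluded from part (1) and rerouted into $X_{G\ominus_1 F}$.
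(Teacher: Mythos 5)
Your broad outline (two implications, two inclusions, Young's inequality for the soft one, argmax selection for the hard one, niceness doing the heavy lifting) is correct, but both directions have a genuine gap.

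For necessity you are hunting for a bad witnessing function or a fundamental-function mismatch, but the mechanism is much cruder: when $\psi_X(0^+)>0$, $F$ is finite and $G$ jumps to infinity, the conjugate satisfies $(G\ominus F)(t)=\sup_{0\le s<b_F}\{G(st)-F(s)\}=\infty$ for \emph{every} $t>0$ (take $s$ with $st>b_G$; since $F$ is finite nothing caps the supremum). Hence $X_{G\ominus F}=\{0\}$. On the other hand, $\psi_X(0^+)>0$ gives $X\hookrightarrow L_\infty$, so $X_F\hookrightarrow L_\infty$ and $\|{\bf 1}_{[0,1]}\|_{M(X_F,X_G)}\le\|X_F\hookrightarrow L_\infty\|\,\|{\bf 1}_{[0,1]}\|_{X_G}<\infty$, whence $M(X_F,X_G)\neq\{0\}$. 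Your phrase ``$G\ominus F$ typically jumps to infinity too'' is too weak and points you in the wrong direction: it is identically infinite off zero, and the separation is trivial once you see one side is the zero space.

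For the hard inclusion $M(X_F,X_G)\hookrightarrow X_{G\ominus F}$, the step your sketch waves at — ``showing this selection $g$ actually lies in $X_F$ with controlled modular'' — is the entire content of the argument, and neither pointwise measurable selection nor a layer-cake decomposition by the size of $|f|$ is what makes it go. The paper first reduces to positive \emph{simple} $f=\sum_n a_n{\bf 1}_{A_n}$ (Fatou handles the extension), so the maximizer $g=\sum_n b_n{\bf 1}_{A_n}$, with each $b_n$ achieving $\sup_{0\le s\le a}\{G(a_n s)-F(s)\}$, is a finite selection-free choice. The decisive device is then a bootstrap induction over a partition $\mathbb{I}=\bigcup_n\Delta_n$ of the \emph{measure space} (not of the range of $|f|$) into sets of measure so small that $\|{\bf 1}_{\Delta_n}\|_{X_F}$ is suitably bounded; setting $g_n=\sum_{i\le n}g{\bf 1}_{\Delta_i}$, one uses $\mathcal{M}_F(g_{n-1})\le 1/2$ and $\mathcal{M}_F(g{\bf 1}_{\Delta_n})\le 1/2$ to get $\|g_n\|_{X_F}\le 1$, then pushes this through the chain $\mathcal{M}_F(g_n)\le\mathcal{M}_G(fg_n)\le\|fg_n\|_{X_G}\le\|f\|_{M(X_F,X_G)}\|g_n\|_{X_F}\le 1/2$, recovering the inductive hypothesis. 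Niceness is exactly what guarantees the small-indicator partition exists (it controls $\psi_{X_F}(0^+)$). Without this bootstrap your plan asserts the needed modular bound rather than proving it, and ``diagonalization over the truncations'' is not needed: one fixes $a<b_F$, proves $\mathcal{M}_{G\ominus_a F}(f)\le 1/2$, and only then lets $a\to b_F^-$ via Fatou.
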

	\begin{proof}
		{\bf Proof of necessity.} Suppose that the triple $(X,F,G)$ fails to be nice.
		We have to prove that
		\begin{equation} \label{EQ : necessity} \tag{$\clubsuit$}
			M(X_F, X_G) \neq X_{G \ominus F}.
		\end{equation}
		Actually, a simple plan to justify \eqref{EQ : necessity} is to show that the space $X_{G \ominus F}$ is trivial and $M(X_F, X_G)$ is not.
		The former is easy.
		Indeed, since the function $G$ jump to infinity and $F$ is finite, so the Young conjugate function $G \ominus F$ is identically equal
		to infinity outside zero.
		This means that the space $X_{G \ominus F}$ can only contain one function, namely, the one that is equal to zero almost everywhere.
		With this in mind, let us make one more observation.
		Since the fundamental function $\psi_X$ of the space $X$ does not vanish at zero, so it follows that $X \hookrightarrow L_{\infty}$.
		Thus, also $X_F \hookrightarrow (L_{\infty})_F = L_{\infty}$.
		We have
		\begin{align*}
			\norm{{\bf 1}_{[0,1]}}_{M(X_F, X_G)}
				& = \sup \{ \norm{f {\bf 1}_{[0,1]}}_{X_G} \colon f \in \text{Ball}(X_F) \} \\
				& \leqslant \norm{X_F \hookrightarrow L_{\infty}} \, \sup \{ \norm{f {\bf 1}_{[0,1]}}_{X_G} \colon f \in \text{Ball}(L_{\infty}) \} \\
				& \leqslant \norm{X_F \hookrightarrow L_{\infty}} \, \norm{{\bf 1}_{[0,1]}}_{X_G} \\
				& = \norm{X_F \hookrightarrow L_{\infty}} \left[ G^{-1}\left( 1 / \psi_X(1) \right) \right]^{-1},
		\end{align*}
		where the last equality is due to \eqref{EQ : fundamental function of X_F}.
		This means, of course, that the space $M(X_F, X_G)$ is non-trivial and $\eqref{EQ : necessity}$ follows.
		
		{\bf Proof of sufficiency.} Suppose that the triple $(X,F,G)$ is nice.
		This is, unfortunately, the moment where things start to get a little more cumbersome, because formally we have five different situations to consider.
		However, be not of faint heart, we can divide the whole argument into just two cases, one of which is almost obvious.
		
		$\bigstar$ The case when $b_F = \infty$, $b_G < \infty$ and $\psi_X(0) = 0$.
		From what we said above, we already know that in this situation the space $X_{G \ominus F}$ is trivial.
		Moreover, due to our assumptions, $X_F \not\hookrightarrow L_{\infty}$ and $X_G \hookrightarrow L_{\infty}$.
		But this means that the space $M(X_F,X_G)$ is trivial as well.
		(Otherwise, we would have that $\norm{f {\bf 1}_{[0,1]}}_{L_{\infty}} \leqslant C \norm{f {\bf 1}_{[0,1]}}_{X_F}$, which is clearly impossible,
		because the space $X_F$ contains unbounded functions.)
		There is no doubt that $0 = 0$.
		
		$\bigstar$ The remaining case.
		We can assume right away that $(G \ominus F)(t)$ is finite for some $t > 0$, because the opposite is only possible when
		$b_G < \infty$ and $b_F = \infty$.
		Therefore, the space $X_{G \ominus F}$ is non-trivial.
		Moreover, without any loss of generality we can assume that $b_F \geqslant 1$.
		Let us start with the embedding
		\begin{equation} \tag{$\heartsuit$} \label{EQ : wlozenie 1}
			X_{G \ominus F} \hookrightarrow M(X_F,X_G).
		\end{equation}
		Take $f \in X_{G \ominus F}$ with $\norm{f}_{X_{G \ominus F}} \leqslant 1/2$ and $g \in X_F$ with $\norm{g}_{X_F} \leqslant 1/2$.
		We have
		\begin{align*}
			\mathcal{M}_G(fg)
			& = \norm{G(\abs{fg})}_X \\
			& \leqslant\norm{(G \ominus F)(\abs{f}) + F(\abs{g})}_X \quad \quad (\text{using \eqref{EQ : generalized Young inequality}}) \\
			& \leqslant\norm{(G \ominus F)(\abs{f})}_X + \norm{F(\abs{g})}_X \quad \quad (\text{by $\triangle$-inequality}) \\
			& \leqslant 1.
		\end{align*}
		Consequently, using \eqref{EQ : norm-modular relation 2}, $fg \in X_{G}$ with $\norm{fg}_{X_G} \leqslant 1$ and \eqref{EQ : wlozenie 1} follows.
		Now, let us focus on the opposite embedding
		\begin{equation} \tag{$\diamondsuit$} \label{EQ : wlozenie 2}
			M(X_F,X_G) \hookrightarrow X_{G \ominus F}.
		\end{equation}
		Note, that in order to show \eqref{EQ : wlozenie 2}, it is enough to prove something a little easier,
		namely that there is a constant $C > 0$ such that for all positive simple functions, say $f$, the following
		inequality
		\begin{equation} \label{EQ : redukacja 1}
			\norm{f}_{X_{G \ominus F}} \leqslant C \norm{f}_{M(X_F,X_G)}
		\end{equation}
		holds.
		Indeed, a straightforward argument based on the Fatou property of the space $X$ will do the job.
		Observe, however, that we can make one more reduction and instead of proving \eqref{EQ : redukacja 1}, we can
		only show that for any $1 < a < b_F$ the following modular inequality
		\begin{equation} \label{EQ : to chcemy pokazac}
			\mathcal{M}_{G \ominus_a F}(f) \leqslant \frac{1}{2}
		\end{equation}
		holds.
		To see this, let $f$ be a positive simple function, that is, $f = \sum_{n=1}^N a_n {\bf 1}_{A_n}$,
		where $a_n$'s are positive reals and $A_n$'s are disjoint sets of positive but finite measure.
		Without the loss of generality we can assume that
		\begin{equation} \label{EQ : redukacja 2}
			\norm{f}_{M(X_F,X_G)} \leqslant \frac{1}{2 \delta},
		\end{equation}
		where $\delta = b_F$ provided $F$ jumps to infinity or $\delta = 1$ otherwise.
		Remembering about \eqref{EQ : granica dopelniajacych} and using \eqref{EQ : to chcemy pokazac}, we have
		\begin{equation*}
			\mathcal{M}_{G \ominus F}(f)
				= \norm{(G \ominus F)(f)}_X
				= \liminf_{a \rightarrow b_{F}^{-}} \norm{(G \ominus_a F)(f)}_X
				\leqslant \frac{1}{2}.
		\end{equation*}
		Thus, due to \eqref{EQ : norm-modular relation 2}, we have $\norm{f}_{X_{G \ominus F}} \leqslant 1$ and \eqref{EQ : redukacja 1}
		follows with $C = 2 \delta$.
		To recap, form this point on, we can focus all our efforts on showing \eqref{EQ : to chcemy pokazac}.
		Fix $1 < a < b_F$.
		It is easy to see that for any $1 \leqslant n \leqslant N$ there is  $0<b_n\leqslant a$ with
		\begin{equation} \label{EQ : przed}
			G(a_n b_n) = (G \ominus_a F)(a_n) + F(b_n).
		\end{equation}
		Set
		\begin{equation} \label{EQ : definition of g}
			g \coloneqq \sum_{n=1}^{N} b_n {\bf 1}_{A_n}.
		\end{equation}
		Then, in view of \eqref{EQ : przed},
		\begin{equation} \label{EQ : po}
			G(fg) = (G \ominus_a F)(f) + F(g).
		\end{equation}
		We claim that
		\begin{equation} \label{EQ : juz ostatnie}
			\norm{g}_{X_F} \leqslant 1.
		\end{equation}
		We will justify this in three steps.
		
		{\bf Step 1: Divide and conquer.}
		We are going to divide the support $\mathbb{I}$ of the space $X$ into a sequence $\{ \Delta_n \}_{n=1}^{\infty}$
		of pairwise disjoint sets with finite measure.
		To be precise here, in the case when $\mathbb{I} = [0,\infty)$ we actually need an infinite number of pieces of positive measure,
		whereas in the case when $\mathbb{I} = [0,1]$ we will be satisfied with only a finite number of $\Delta_n$'s having positive measure.
		Further, the division algorithm will depend on two factors, namely, whether $F$ is finite or not and whether $\psi_X(0^{+})$
		is equal to or greater than zero.
		We will explain how to do this in the simplest situation, that is, when both $F$ and $G$ are finite and $\psi_X(0^{+}) = 0$.
		Later we will show how to modify this construction to work in other cases as well.
		Let us start putting our plans into action.
		Since $\psi_{X_F}(0^{+}) = 0$, so simple functions are order continuous in $X$.
		In consequence, there is $\lambda > 0$ with $\norm{{\bf 1}_{\Delta}}_{X_F} \leqslant 1/a$ for all $\Delta \subset \mathbb{I}$
		with $m(\Delta) \leqslant \lambda$.
		Plainly, since the space $X$ is rearrangement invariant, so $\mathbb{I} = \bigcup_{n=1}^{\infty} \Delta_n$,
		where $\Delta_n \coloneqq [(n-1)\lambda,n\lambda] \cap \mathbb{I}$ for $n \in \mathbb{N}$.
		Moreover, $\norm{{\bf 1}_{\Delta_n}}_{X_F} \leqslant 1/a$ for all $n \in \mathbb{N}$.
		Now, suppose that $b_F < \infty$ and $\psi_X(0^{+}) \geqslant 0$.
		Without any loss of the generality, we can assume that $b_F > 1$ and $\psi_X(0^{+}) \leqslant 1$.
		Then $\psi_{X_F}(0^{+}) \leqslant 1/b_F < 1$.
		Thus, again, we can find $\lambda > 0$ with $\norm{{\bf 1}_{\Delta}}_{X_F} \leqslant 1$ for all $\Delta \subset \mathbb{I}$ with $m(\Delta) \leqslant \lambda$.
		The rest is already known.
		Finally, let us discuss the case when $\psi_{X}(0^{+}) > 0$ but both $F$ and $G$ are finite.
		As before, we can assume that $\psi_{X_F}(0^{+}) = 1 / F^{-1}(1/\psi_X(0^+)) < 1$.
		In consequence, there is $\lambda > 0$ with $\norm{{\bf 1}_{\Delta}}_{X_F} \leqslant 1$ whenever $\Delta \subset \mathbb{I}$ with $m(\Delta) \leqslant \lambda$.
		Again, the rest is clear.
		The algorithm is finished.
		
		Next, it follows from the very definition of $g$ that $\abs{g(t)} \leqslant a < b_F$.
		Indeed, according to \eqref{EQ : definition of g}, $g$ is just a simple function of the form $g = \sum_{n=1}^N b_n {\bf 1}_{A_n}$
		with all $b_n$'s not exceeding $a < b_F$ (for this, take a look at \eqref{EQ : przed} and what precedes it).
		So, without forgetting about \eqref{EQ : redukacja 2}, we get
		\begin{equation} \label{EQ : ABDASFMKA}
			\norm{fg {\bf 1}_{\Delta_n}}_{X_G}
				\leqslant \norm{f}_{M(X_F,X_G)} \norm{g {\bf 1}_{\Delta_n}}_{X_F}
				\leqslant \frac{a}{2 \delta} \norm{{\bf 1}_{\Delta_n}}_{X_F}
				\leqslant \frac{1}{2}.
		\end{equation}
		Using \eqref{EQ : norm-modular relation 3} along with \eqref{EQ : po}, we have
		\begin{equation} \label{EQ : D&C}
			\mathcal{M}_F(g {\bf 1}_{\Delta_n})
				\leqslant \mathcal{M}_G(fg {\bf 1}_{\Delta_n})
				\leqslant \norm{fg {\bf 1}_{\Delta_n}}_{X_G}
				\leqslant \frac{1}{2},
		\end{equation}
		where the last inequality follows from \eqref{EQ : ABDASFMKA}.
		Finally, for $n \in \mathbb{N}$, set
		\begin{equation*}
			g_n \coloneqq \sum_{i=1}^n g {\bf 1}_{\Delta_i}.
		\end{equation*}
		{\bf End of Step 1.}
		
		{\bf Step 2: Inductive argument.}
		Now, we will show that
		\begin{equation} \label{EQ : inductvie}
			\mathcal{M}_F(g_n) \leqslant \frac{1}{2}
		\end{equation}
		for all $n \in \mathbb{N}$. For $n = 1$ this follows from \eqref{EQ : D&C}.
		Thus, let $n \geqslant 2$ and assume that $\mathcal{M}_F(g_{n-1}) \leqslant 1/2$.
		Then
		\begin{equation*}
			\mathcal{M}_F(g_n) = \mathcal{M}_F(g_{n-1}) + \mathcal{M}_F(g {\bf 1}_{\Delta_n}) \leqslant 1.
		\end{equation*}
		In consequence, due to \eqref{EQ : norm-modular relation 2},
		\begin{equation} \label{EQ : potrzebne}
			\norm{g_n}_{X_F} \leqslant 1.
		\end{equation}
		We have
		\begin{align*}
			\mathcal{M}_F(g_n)
				& \leqslant \mathcal{M}_G(fg_n) \quad \quad (\text{since, due to \eqref{EQ : po}, $F(g) \leqslant M(fg)$}) \\
				& \leqslant \norm{fg_n}_{X_G} \quad \quad (\text{in view of \eqref{EQ : norm-modular relation 3}, \eqref{EQ : redukacja 2} and \eqref{EQ : potrzebne}}) \\
				& \leqslant \norm{f}_{M(X_F,X_G)} \norm{g_n}_{X_F} \quad \quad (\text{by the H{\" o}lder--Rogers inequality \eqref{EQ : general Holder--Rogers}}) \\
				& \leqslant \frac{1}{2} \quad \quad (\text{by \eqref{EQ : redukacja 2} together with \eqref{EQ : potrzebne}}).
		\end{align*}
		In other words, \eqref{EQ : inductvie} follows. {\bf End of Step 2.}
		
		{\bf Step 3: Limit argument.}
		We know, thanks to \eqref{EQ : inductvie}, that $\mathcal{M}_F(g_n) \leqslant 1/2$ for all $n \in \mathbb{N}$.
		Thus, invoking the Fatou property, we get
		\begin{equation} \label{EQ : uzyte Fatou}
			\norm{g}_{X_F} = \sup \{ \norm{g_n}_{X_F} \colon n \in \mathbb{N} \} \leqslant 1.
		\end{equation}
		But this is exactly what we wanted: \eqref{EQ : juz ostatnie} follows. {\bf End of Step 3.}
				
		Having \eqref{EQ : juz ostatnie} in hand, we can finally finish the whole proof by showing \eqref{EQ : to chcemy pokazac}.
		This is how it goes
		\begin{align*}
			\mathcal{M}_{G \ominus_a F}(f)
				& \leqslant \mathcal{M}_G(fg) \quad \quad (\text{since $(G \ominus_a F)(f) \leqslant G(fg)$}) \\
				& \leqslant \norm{fg}_{X_G} \quad \quad (\text{using \eqref{EQ : norm-modular relation 3}, \eqref{EQ : redukacja 2} and \eqref{EQ : uzyte Fatou}}) \\
				& \leqslant \norm{f}_{M(X_F,X_G)} \norm{g}_{X_F} \quad \quad (\text{by the H{\" o}lder--Rogers inequality \eqref{EQ : general Holder--Rogers}}) \\
				& \leqslant \frac{1}{2} \quad \quad (\text{using \eqref{EQ : redukacja 2} and \eqref{EQ : juz ostatnie}}).
		\end{align*}
		The proof has been completed.
	\end{proof}

	\begin{remark}[What if the triple $(X,F,G)$ is not nice?] \label{REMARK : not nice}
		Note that it follows from the proof of Theorem~\ref{THEOREM : PM between CL} that if the triple $(X,F,G)$ fails to be nice,
		then although $M(X_F,X_G) \neq X_{G \ominus F}$, the space $M(X_F,X_G)$ is nevertheless non-trivial.
		The question that certainly looms on the horizon is: {\it How to describe the space $M(X_F,X_G)$?}
		In the case when $X$ is defined on the unit interval $\mathbb{I} = [0,1]$, things are rather straightforward.
		In fact, since the fundamental function $\psi_X$ does not vanish at zero, so $X = L_{\infty}[0,1]$ and
		\begin{align*}
			M(X_F,X_G)
				& \equiv M((L_{\infty}[0,1])_F, (L_{\infty}[0,1])_G) \\
				& = M(L_{\infty}[0,1], L_{\infty}[0,1]) \\
				& \equiv L_{\infty}[0,1].
		\end{align*}
		However, the remaining case, when $X$ is defined on the half-line $\mathbb{I} = [0,\infty)$, is definitely less obvious.
	\end{remark}

	The next result clarifies this situation.

	\begin{theorem} \label{THEOREM : main thm not nice}
		{\it Let $X$ be a rearrangement invariant space.
		Further, let $F$ and $G$ be two Young functions.
		Suppose that either the triple $(X,F,G)$ is not \enquote{nice} (see Notation~\ref{NOTATION : Nice triple} for clarification)
		or the space $X$ is a sequence space.
		Then $M(X_{F},X_{G}) = X_{G \ominus_1 F}$.}
	\end{theorem}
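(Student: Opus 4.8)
The plan is to rerun the proof of Theorem~\ref{THEOREM : PM between CL}, but with the truncation parameter frozen at the fixed level $1$ instead of being sent up to $b_F$. The first thing I would do is extract the one structural consequence shared by the two hypotheses: if $(X,F,G)$ fails to be nice then, in particular, $\psi_X$ does not vanish at zero, so $X\hookrightarrow L_{\infty}$; and if $X$ is a sequence space then $\abs{f(k)}\,\psi_X(1)\leqslant\norm{f}_X$ for every index $k$, so again $X\hookrightarrow L_{\infty}$. Hence $X_F\hookrightarrow L_{\infty}$ and $X_G\hookrightarrow L_{\infty}$, and there is a finite constant $c_0$ with $\text{Ball}(X_F)\subseteq c_0\,\text{Ball}(L_{\infty})$. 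One checks painlessly that $X_{G\ominus_1 F}$ is non-trivial (because $(G\ominus_1 F)(t)\leqslant G(t)<\infty$ for $t<b_G$) and that $M(X_F,X_G)$ is non-trivial (on a finite-measure set $A$ the bound $\norm{h\mathbf{1}_A}_{X_G}\leqslant\norm{h\mathbf{1}_A}_{L_{\infty}}\norm{\mathbf{1}_A}_{X_G}$ together with $X_F\hookrightarrow L_{\infty}$ shows $X_F$ embeds locally into $X_G$). It is worth noting that the value $1$ is immaterial here: since $X\hookrightarrow L_{\infty}$ and the functions $G\ominus_a F$ all have the same germ at the origin, the spaces $X_{G\ominus_a F}$ for admissible $a$, and $X_{G\ominus F}$ itself, coincide up to equivalence of norms --- which is also what makes the statement consistent with Theorem~\ref{THEOREM : PM between CL} wherever the two might overlap.

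For the easy inclusion $X_{G\ominus_1 F}\hookrightarrow M(X_F,X_G)$ I would take $f\in X_{G\ominus_1 F}$ with $\norm{f}_{X_{G\ominus_1 F}}\leqslant\tfrac12$ and $g\in X_F$ with $\norm{g}_{X_F}$ small enough --- using $\text{Ball}(X_F)\subseteq c_0\,\text{Ball}(L_{\infty})$ --- that $\abs{g}\leqslant 1$ almost everywhere, and then apply the generalized Young inequality~\eqref{EQ : generalized Young inequality} with $a=1$: $G(\abs{fg})\leqslant(G\ominus_1 F)(\abs{f})+F(\abs{g})$ pointwise. Taking $\norm{\cdot}_X$, using the $\triangle$-inequality and \eqref{EQ : norm-modular relation 2}--\eqref{EQ : norm-modular relation 3}, this gives $\mathcal{M}_G(fg)\leqslant 1$, hence $\norm{fg}_{X_G}\leqslant 1$; rescaling yields $\norm{f}_{M(X_F,X_G)}\leqslant 4c_0\norm{f}_{X_{G\ominus_1 F}}$.

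The substance of the theorem is the reverse inclusion $M(X_F,X_G)\hookrightarrow X_{G\ominus_1 F}$, which follows the three-step scheme of the proof of Theorem~\ref{THEOREM : PM between CL} with the truncation kept at $1$. By the Fatou property of $X$ it suffices to find a fixed $\eta>0$ such that every positive simple function $f=\sum_{n=1}^{N}a_n\mathbf{1}_{A_n}$ with $\norm{f}_{M(X_F,X_G)}\leqslant\eta$ satisfies $\mathcal{M}_{G\ominus_1 F}(f)\leqslant\tfrac12$ --- this gives $\norm{f}_{X_{G\ominus_1 F}}\leqslant\eta^{-1}\norm{f}_{M(X_F,X_G)}$ on simple functions, and Fatou does the rest. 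The constant $\eta$ is taken small enough both that the modular estimates below close and that all the heights $a_n$ stay strictly below $b_G$; the latter is secured by \eqref{EQ : fundamental function of X_F}, which gives $\norm{f}_{\infty}\leqslant\eta\,G^{-1}(1/\psi_X(0^{+}))/F^{-1}(1/\psi_X(0^{+}))$ (with $\psi_X(1)$ in the sequence case) and $G^{-1}(1/\psi_X(0^{+}))\leqslant b_G$. With $a_n<b_G$ one can pick, for each $n$, a value $b_n\in[0,1]$ with $(G\ominus_1 F)(a_n)=G(a_n b_n)-F(b_n)$; then $g\coloneqq\sum_n b_n\mathbf{1}_{A_n}$ has $\abs{g}\leqslant 1$ and $(G\ominus_1 F)(f)=G(fg)-F(g)$ pointwise, so, once $\norm{g}_{X_F}\leqslant 1$ is established, the H\"older--Rogers inequality~\eqref{EQ : general Holder--Rogers} and \eqref{EQ : norm-modular relation 3} yield $\mathcal{M}_{G\ominus_1 F}(f)\leqslant\mathcal{M}_G(fg)\leqslant\norm{fg}_{X_G}\leqslant\norm{f}_{M(X_F,X_G)}\norm{g}_{X_F}\leqslant\eta\leqslant\tfrac12$.

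Everything therefore hinges on $\norm{g}_{X_F}\leqslant 1$, and this is the one genuinely delicate point --- and precisely the reason the truncation cannot be taken past $1$. One splits the (finite-measure!) support of $f$ into finitely many sets $\Delta_i$ so small that $\psi_{X_F}(m(\Delta_i))\leqslant 2\psi_{X_F}(0^{+})$ when $X$ is a function space, or into singletons when $X$ is a sequence space; since $\abs{g}\leqslant 1$ this gives $\norm{g\mathbf{1}_{\Delta_i}}_{X_F}\leqslant\psi_{X_F}(m(\Delta_i))$, hence $\norm{fg\mathbf{1}_{\Delta_i}}_{X_G}\leqslant\norm{f}_{M(X_F,X_G)}\norm{g\mathbf{1}_{\Delta_i}}_{X_F}\leqslant\tfrac12$ for $\eta$ small, and then $\mathcal{M}_F(g\mathbf{1}_{\Delta_i})\leqslant\mathcal{M}_G(fg\mathbf{1}_{\Delta_i})\leqslant\norm{fg\mathbf{1}_{\Delta_i}}_{X_G}\leqslant\tfrac12$ via $F(g)\leqslant G(fg)$ and \eqref{EQ : norm-modular relation 3}. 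From here the inductive argument of Step~2 of Theorem~\ref{THEOREM : PM between CL} (modular additivity over disjoint supports, plus a H\"older--Rogers bound that closes the induction) gives $\mathcal{M}_F\bigl(\sum_{i\leqslant k}g\mathbf{1}_{\Delta_i}\bigr)\leqslant\tfrac12$ for every $k$, and the Fatou property delivers $\norm{g}_{X_F}\leqslant 1$. The obstacle lies exactly in this decomposition: in Theorem~\ref{THEOREM : PM between CL} simple functions are order continuous in $X$, so $\norm{\mathbf{1}_{\Delta}}_{X_F}$ can be made arbitrarily small and the truncation can climb to $b_F$, whereas here $X_F\hookrightarrow L_{\infty}$ forces $\psi_{X_F}(0^{+})>0$ and the decomposition can only bring $\norm{g\mathbf{1}_{\Delta_i}}_{X_F}$ down to roughly $\psi_{X_F}(0^{+})$ (respectively $\psi_{X_F}(1)$) --- which is what freezes the truncation at $1$ and produces $G\ominus_1 F$ in place of $G\ominus F$. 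In the residual case $b_F\leqslant 1$, where $G\ominus_1 F$ is to be read as $G\ominus F$, one simply runs the same argument with $a\nearrow b_F$, noting the constants stay bounded because $a$ is.
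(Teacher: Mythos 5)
Your proof is correct in all essential respects and follows the same route as the paper: reduce to the situation $X\hookrightarrow L_\infty$, obtain the easy embedding from the truncated Young inequality with $a=1$, and prove the hard embedding by passing to simple functions, building the extremal $g$ with values in $[0,1]$, chopping the support into pieces on which $\norm{g\mathbf 1_{\Delta_i}}_{X_F}$ is uniformly small, and running the inductive modular bootstrap of \enquote{Step~2} plus Fatou to get $\norm{g}_{X_F}\leqslant 1$. The differences from the paper's write-up are cosmetic: you track explicit constants ($c_0$, $\eta$, $\psi_{X_F}(0^{+})$) where the paper normalizes $F(1)=G(1)=1$ and $\psi_X$; you choose the $\Delta_i$'s via continuity of $\psi_{X_F}$ rather than using unit intervals (resp.\ singletons); and you flag the corner case $b_F\leqslant 1$ which the paper absorbs into its normalization.

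One aside is wrong and worth correcting, though it does not affect the argument. You claim that the functions $G\ominus_a F$ all have the same germ at the origin and hence that $X_{G\ominus_1 F}$ coincides with $X_{G\ominus F}$, \enquote{which makes the statement consistent with Theorem~\ref{THEOREM : PM between CL} wherever the two might overlap.} This is exactly backwards in the not-nice regime: there $F$ is finite and $G$ jumps to infinity, so $(G\ominus F)(t)=\sup_{s\geqslant 0}\{G(st)-F(s)\}=\infty$ for every $t>0$ and $X_{G\ominus F}=\{0\}$, whereas $G\ominus_1 F$ is a genuine Young function and $X_{G\ominus_1 F}$ is non-trivial (indeed this is the paper's proof that $M(X_F,X_G)\neq X_{G\ominus F}$ there). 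So the two constructions deliberately disagree; there is no overlap to reconcile, since Theorem~\ref{THEOREM : PM between CL} is stated for function spaces and its \enquote{if and only if} already excludes $M(X_F,X_G)=X_{G\ominus F}$ when the triple fails to be nice, while Theorem~\ref{THEOREM : main thm not nice} supplies the correct description $X_{G\ominus_1 F}$ in exactly that residual regime.
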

	\begin{proof}
		To facilitate the impending maneuvers, without any loss of generality, we may assume that
		\begin{enumerate}
			\item[$\bullet$] $F(1) = G(1) = 1$;
			\item[$\bullet$] $b_G > 1$ and $\psi_X(0) = 1$ provided $X$ is a function space;
			\item[$\bullet$] $\psi_X(1) = 1$ provided $X$ is a sequence space.
		\end{enumerate}
		Moreover, by Remark~\ref{REMARK : not nice}, we can ignore the case when the space $X$ is defined on $\mathbb{I} = [0,1]$.
		Now, we will consider both embeddings separately.
	
		{\bf Embedding $X_{G \ominus_1 F} \hookrightarrow M(X_{F},X_{G})$.}
		Take $f \in X_{G \ominus_1 F}$ with $\norm{f}_{X_{G \ominus_1 F}} \leqslant 1/2$ and $g \in X_{F}$ with $\norm{g}_{X_F} \leqslant 1/2$. 
		Our plan is to show that the product $f g$ belongs to $X_{G}$ with $\norm{f g}_{X_G} \leqslant 1$. 
		Recall the following variant of Young's inequality
		\begin{equation} \label{EQ : GYI01}
			G(st) \leqslant (G \ominus_1 F)(s) + F(t),
		\end{equation}
		where $0 \leqslant s,t \leqslant 1$, follows directly from the definition of the function $G \ominus_1 F$
		(see Definition~\ref{DEF : generalized Young conjugate} and \eqref{EQ : generalized Young inequality}).
		Moreover, in view of our assumptions, $\norm{X \hookrightarrow L_{\infty}} = 1$, so $\norm{f}_{L_{\infty}} \leqslant 1$
		and $\norm{g}_{L_{\infty}} \leqslant 1$.
		We have
		\begin{align*}
			\mathcal{M}_G(fg)
				& = \norm{G(\abs{fg})}_X \\
				& \leqslant\norm{(G \ominus_1 F)(\abs{f}) + F(\abs{g})}_X \quad \quad (\text{using \eqref{EQ : GYI01}}) \\
				& \leqslant\norm{(G \ominus_1 F)(\abs{f})}_X + \norm{F(\abs{g})}_X \quad \quad (\text{via $\triangle$-inequality}) \\
				& \leqslant\frac{1}{2} + \frac{1}{2}.
		\end{align*}
		Here, the last inequality is due to the well-known relation between the norm and the modular \eqref{EQ : norm-modular relation 3}.
		This means that indeed $fg \in X_{G}$ with $\norm{fg}_{X_G} \leqslant 1$.
		That is all we wanted for now.
		
		{\bf Embedding $M(X_{F},X_{G}) \hookrightarrow X_{G \ominus_1 F}$.}
		Note that we only need to show that the following inequality
		\begin{equation} \label{orliczfp_notnice} \tag{$\spadesuit$}
			\norm{f}_{X_{G \ominus_1 F}} \leqslant C \norm{f}_{M(X_{F},X_{G})}
		\end{equation}
		holds for some constant $C > 0$ and all positive, simple functions $f$.
		Then, using the Fatou property, we can lift this inequality to the whole space $M(X_{F},X_{G})$.
		Keeping this in mind, let $f$ be a positive, simple function with $\norm{f}_{M(X_{F},X_{G})} \leqslant \frac{1}{2}$.
		Plainly, there is a sequence $\{ a_n \}_{n=1}^N$ of positive reals together with a sequence $\{ A_n \}_{n=1}^{\infty}$ of sets with
		positive but finite measure, such that $f = \sum_{n=1}^N a_n {\bf 1}_{A_n}$.
		Since $M(X_F,L_{\infty}) \equiv L_{\infty}$ and, due to our assumptions, $\norm{M(X_F,X_G) \hookrightarrow M(X_F,L_{\infty})} \leqslant 1$,
		so $\norm{f}_{L_{\infty}} \leqslant 1$.
		Moreover, using the standard compactness argument, for every $n \in \mathbb{N}$, we can find  $0 \leqslant b_k \leqslant 1$ with
		\begin{equation*}
			G(a_n b_n) = (G \ominus_1 F)(a_n) + F(b_n).
		\end{equation*}
		Set $g \coloneqq \sum_{n=1}^{\infty} b_n {\bf 1}_{A_n}$. We claim that
		\begin{equation} \label{norm of g_notnice}
			\norm{g}_{X_F} \leqslant 1.
		\end{equation}
		To see this, we will divide the measure space underlying $X$ into a sequence $\{ \Delta_n \}_{n=1}^{\infty}$
		of pairwise disjoint sets with positive and finite measure.
		Let us explain how to do this.
		
		$\bigstar$ The situation in which $X$ is a sequence space is the most straightforward.
		Just take $\Delta_n$ as the $n^{\text{th}}$ atom, that is, $\Delta_n = \{ n \}$ for $n \in \mathbb{N}$.
		Then, making use of \eqref{EQ : fundamental function of X_F} along with the fact that the space $X$ is rearrangement invariant,
		we infer that
		\begin{equation*}
			\norm{{\bf 1}_{\{ n \}}}_{X_F}
				= \norm{{\bf 1}_{\{ 1 \}}}_{X_F}
				= [F^{-1}(1/\psi_X(1))]^{-1}
				= [F^{-1}(1)]^{-1}
				\leqslant 1.
		\end{equation*}
		
		$\bigstar$ Next, suppose that $X$ is a function space. It is enough to take $\Delta_n = [n-1,n]$ for $n \in \mathbb{N}$.
		Then, exactly as above,
		\begin{equation*}
			\norm{{\bf 1}_{[n-1,n]}}_{X_F}
				= \norm{{\bf 1}_{[0,1]}}_{X_F}
				= [F^{-1}(1/\psi_X(1))]^{-1}
				\leqslant [F^{-1}(1)]^{-1}
				\leqslant 1.
		\end{equation*}
		
		\noindent
		All this is, of course, very similar to the proof of Theorem~\ref{THEOREM : PM between CL} (see \enquote{Step~1}).
		For this reason, to obtain \eqref{norm of g_notnice}, it is enough to mimic the inductive argument in \enquote{Step~2} contained therein.
		However, since repeating all this here may seem rather boring, we leave easy-to-fill details for the inquisitive reader.
		
		Going ahead, we have
		\begin{align*}
			\mathcal{M}_{G \ominus_1 F}(f)
				& \leqslant \mathcal{M}_{G}(fg) \quad \quad (\text{since $(G \ominus_1 F)(f) \leqslant G(fg)$}) \\
				& \leqslant \norm{fg}_{X_G} \quad \quad (\text{using \eqref{EQ : norm-modular relation 3}}) \\
				& \leqslant \norm{f}_{M(X_F,X_G)} \norm{g}_{X_F} \quad \quad (\text{by the H{\" o}lder--Rogers inequality \eqref{EQ : general Holder--Rogers}}) \\
				& \leqslant \frac{1}{2} \quad \quad (\text{in view of \eqref{norm of g_notnice}}).
		\end{align*}
		But this means that \eqref{orliczfp_notnice} holds.
		In consequence, the proof has been completed.
	\end{proof}

	It seems quite instructive to support the above results with some concrete calculations.
	This is by no means difficult, but rather tedious.
	Recall that the {\bf $p^{\text{th}}$-power function $F_p \colon [0,\infty) \rightarrow [0,\infty)$}
	with $1 \leqslant p < \infty$ is defined in the following way
	\begin{equation*}
		F_p(t)\coloneq \frac{1}{p} t^p.
	\end{equation*}

	\begin{example} \label{EXAMPLE : rachuneczki 1}
		Let $1 \leqslant p,q < \infty$ with $1 \leqslant p < q < \infty$ and $1/r = 1/p - 1/q$.
		Our goal is to compute
		\begin{equation*}
			(F_p \ominus F_q)(t) \coloneqq \sup_{s > 0} \left\{ \frac{1}{p}(st)^{p} - \frac{1}{q}s^q \right\}.
		\end{equation*}
		Some elementary calculations show us that for fixed $t > 0$ the extreme value of the function
		\begin{equation*}
			f(s,t) = \frac{1}{p}(st)^{p} - \frac{1}{q}s^q
		\end{equation*}
		is attained at the point $s_{\text{ext}} \coloneqq t^{p / (q-p)}$.
		In consequence, for $t \geqslant 0$, we have
		\begin{align*}
			(F_p\ominus F_q)(t)
				& = \frac{1}{p} (s_{\text{ext}} t)^p - \frac{1}{q} s_{\text{ext}}^q \\
				& = \frac{1}{p} (t^{p / (q-p)} t)^p - \frac{1}{q} (t^{p / (q-p)})^q \\
				& = \frac{1}{p} (t^{(p+q-p) / (q-p)})^p - \frac{1}{q} (t^{pq / (q-p)}) \\
				& = \frac{1}{p} t^r - \frac{1}{q} t^r \\
				& = \frac{1}{r} t^r \\
				& = F_r(t).
		\end{align*}
	\end{example}
	
	\begin{example} \label{EXAMPLE : rachuneczki 2}
		Let $0 < b < \infty$ and $1 \leqslant p,q < \infty$.
		Now, let us modify the function $F_p$ in the following way
		\begin{equation*}
			F_{p,b}(t) \coloneq
			\begin{cases}
				F_p(t) \quad & \text{ if } \quad 0 \leqslant t \leqslant b\\
				\,\,\, \infty \quad & \text{ if } \quad \quad t > b.
			\end{cases}
		\end{equation*}
		It is easy to see that $(F_{p,b} \ominus F_q)(t) = \infty$ for all $t > 0$.
		Thus, as Theorem~\ref{THEOREM : main thm not nice} teaches us, to obtain a non-trivial Young's function we should instead consider
		the function $F_{p,b} \ominus_1 F_q$.
		
		Let us start with the case $1\leqslant p<q\leqslant\infty$.
		The same calculations as in Example~\ref{EXAMPLE : rachuneczki 1} shows that for $0 \leqslant t \leqslant \min\{1,b\}$ and $1/r = 1/p - 1/q$,
		we have
		\begin{equation*}
			(F_{p,b} \ominus_1 F_q)(t) = F_r(t).
		\end{equation*}
		Furthermore, $(F_{p,b} \ominus_1 F_q)(t) = \infty$ for $t > b$.
		On the other hand, for $1 < t \leqslant b$, we have
		\begin{equation*}
			(F_{p,b} \ominus_1 F_q)(t) = \sup_{0 \leqslant s \leqslant 1} \left\{ \frac{1}{p}(st)^p - \frac{1}{q} s^q \right\}.
		\end{equation*}
		Again, due to Example~\ref{EXAMPLE : rachuneczki 1}, we already know that the extreme value of the function $f(s,t) = (st)^p/p - s^q/q$
		with fixed $t > 0$ is attained in the point $s_{\text{ext}} = t^{p / (q-p)} > 1$, that is, outside the range of the parameter $s$.
		Since the function $s \mapsto f(s,t)$ is increasing, so the supremum is attained at the end of the interval $[0,1]$.
		Consequently,
		$$
		(F_{p,b} \ominus_1 F_q)(t) = \frac{1}{p} t^p - \frac{1}{q}.
		$$
		In summary,
		$$
		(F_{p,b} \ominus_1 F_q)(t) =
		\begin{cases}
			\,\,\, F_r(t) \quad & \text{ for } \quad 0 \leqslant t \leqslant \min\{1,b\} \\
			\frac{1}{p} t^p - \frac{1}{q} \quad & \text{ for } \quad \quad 1 < t \leqslant b \\
			\quad \infty \quad & \text{ for } \quad \quad \quad t > b.
		\end{cases}
		$$
		
		Finally, let us consider one more situation when $1 \leqslant q \leqslant p < \infty$.
		In this case $(st)^p / p \leqslant s^q / q$ for $t > 0$ and $0 \leqslant s \leqslant 1$, so
		\begin{equation*}
			(F_{p,b} \ominus_1 F_q)(t) =
			\begin{cases}
				\, 0 \quad & \text{ for } \quad 0 \leqslant t \leqslant b \\
				\infty \quad & \text{ for } \quad \quad t > b.
			\end{cases}
		\end{equation*}
	\end{example}
	
	One of the immediate conclusions from the above result is the following example, first noted by Maligranda and Persson in \cite[Corollary~2]{MP89}.

	\begin{example}[Maligranda and Persson, 1989] \label{EXAMPLE : Maligranda i Persson}
		Let $X$ be a rearrangement invariant space.
		Further, let $1 \leqslant q < p < \infty$ with $1/r = 1/q - 1/p$.
		We claim that
		\begin{equation} \label{EQ : Maligranda i Persson}
			M(X^{(p)}, X^{(q)}) \equiv X^{(r)}.
		\end{equation}
		To see this, note that if $F_p(t) = t^p / p$ and $F_q(t) = t^q / q$, then the Calder{\' o}n--Lozanovski{\u \i}
		constructions $X_{F_p}$ and $X_{F_q}$ coincide with the $p$-convexification $X^{(p)}$ of $X$ and the $q$-convexification $X^{(q)}$ of $X$, respectively.
		Moreover, it is crystal clear that the triple $(X,F_p,F_q)$ is nice.
		Thus, after realizing that $(F_q \ominus F_p)(t) = t^r / r$ (just in case, Example~\ref{EXAMPLE : rachuneczki 1} may be helpful),
		it is enough to call Theorem~\ref{THEOREM : PM between CL} on stage.
		In particular, if $X = L_1$, the formula \eqref{EQ : Maligranda i Persson} reduces to the well-known fact that $M(L_p,L_q) \equiv L_r$.
	\end{example}

	\begin{example} \label{EXAMPLE : MP jak skacze}
		Let $X$ be a rearrangement invariant space with $X \hookrightarrow L_{\infty}$.
		Further, let $1 \leqslant q < p < \infty$ with $1/r = 1/q - 1/p$ and $b = 1$.
		It follows from Theorem~\ref{THEOREM : main thm not nice} and Example~\ref{EXAMPLE : rachuneczki 2} that
		\begin{equation*}
			M(X_{F_{p,b}}, X_{F_{q}}) = X_{F_{r,b}}.
		\end{equation*}
		In particular, for $X = L_1 \cap L_{\infty}$, we get
		\begin{equation*}
			M(L_p \cap L_{\infty}, L_q \cap L_{\infty}) = L_r \cap L_{\infty}.
		\end{equation*}
	\end{example}
	
	\section{{\bf Applications}} \label{SECTION : Applications}
	
	In this section we will show how, from the description of the space of pointwise multipliers between Calder{\' o}n--Lozanovski{\u \i}
	spaces (see Theorem~\ref{THEOREM : PM between CL}), one can deduce the factorization of these spaces.
	
	\subsection{A bird's eye view of factorization} \label{SUBSECTION : Factorization}
	
	Recall that classical {\bf Lozanovski{\u \i}'s factorization} \cite[Theorem~6]{Loz69} (see also \cite{Gil81}, \cite{JR76}, \cite[Example~6, p.~185]{Mal04} and \cite{Rei81})
	teaches us that for any $\varepsilon > 0$ each function $f$ from $L_1$ can be written as a pointwise product of two functions, say $g$ and $h$, one from $X$ and the
	other from the K{\" o}the dual $X^{\times}$ of $X$, in such a way that
	\begin{equation*}
		\norm{f}_{L_1} \leqslant \norm{g}_X \norm{h}_{X^{\times}} \leqslant (1+\varepsilon) \norm{f}_{L_1}.
	\end{equation*}
	Furthermore, knowing that the space $X$ posses the Fatou property, we can set $\varepsilon = 0$.
	In other words, {\bf $L_1$ can be factorized through $X$}, that is,
	\begin{equation*}
		X \odot X^{\times} = X \odot M(X,L_1) = L_1.
	\end{equation*}

	Clearly, one can replace $L_1$ from Lozanovski{\u \i}'s factorization by an arbitrary Banach ideal space $Y$ and ask whether
	{\bf $Y$ can be factorized through $X$}?
	It turns out that an answer to this question is in general very difficult and the equality $X \odot M(X,Y) = Y$ does not hold
	without some extra assumptions on $X$ and $Y$ (see, for example, \cite[Example~2]{KLM14} and \cite[Example~A.2]{KT22}).
	
	The topic of factorization of Banach ideal spaces seems to be very much in vouge recently;
	see, for example, the papers \cite{DR00} and \cite{LT17} for Orlicz spaces; \cite{CS17}, \cite{KLM14} and \cite{Rei81} for Lorentz and Marcinkiewicz spaces;
	\cite{KLM12} for Calder{\' o}n--Lozanovski{\u \i} spaces; \cite{KT22}, \cite{KLM19} and \cite{Sch10} for Ces{\' a}ro and Tandori spaces;
	and \cite{LT21} for Musielak--Orlicz spaces.
	
	Nevertheless, factorization has much more to offer that just a simple analogy with the Lozanovski{\u \i} theorem.
	For example, using some factorization techniques, Nilsson was able to give another proof of Pisier's result (see \cite[Theorem~2.4]{Nil85}).
	Moreover, Odell and Schlumprecht's proof that $\ell_2$ is arbitrarily distortable makes a use of Lozanovski{\u \i}'s factorization
	(see \cite[p.~261]{OS94}).
	There is also a beautiful connection between complex interpolation, Lozanovski{\u \i}'s factorization and the construction of twisted sums of
	Banach spaces (see \cite{CS14}, \cite{Cor22}, \cite{CS17}, \cite{CFG17}, \cite{Kal92} and references therein).
	
	There is, however, a lot of life outside of the \enquote{ideal} world of Banach ideal spaces.
	In the realm of harmonic analysis, for example, (weak) factorization is a domesticated and powerful technique.
	Some remarkable factorization results for Bergman spaces, Hardy spaces and tent spaces, can be found in
	\cite{CRW76}, \cite{CV00}, \cite{Hor77}, \cite{JR76} and \cite{PZ15}.
	
	\subsection{Products of Calder{\' o}n--Lozanovski{\u \i} spaces} \label{SUBSECTION : Products of CL spaces}
	
	The next step, {\it en route} to our factorization results, goes through a series of rather technical lemmas.
	But first, some inevitable notation.
	
	\begin{notation}[Vinogradov's notation] \label{NOTATION : Vinogradov notation}
		{\it For two given quantities, say $A$ and $B$, depending (maybe) on certain parameters, we will write $A \preccurlyeq B$ understanding that
		there exists an absolute constant $C > 0$ (that is, independent of all involved parameters) such that $A \leqslant CB$.
		Moreover, we will write $A \approx B$ meaning that $A \preccurlyeq B$ and $B \preccurlyeq A$.}
	\end{notation}

	\begin{notation}[Small/Large/All arguments]
		{\it Let $F, G \colon [0,\infty) \rightarrow [0,\infty)$ be two real-valued function.
		Further, let the symbol $\square$ denote either the relation $\preccurlyeq$ or $\approx$ (see Notation~\ref{NOTATION : Vinogradov notation}).
		We will say that
		\begin{enumerate}
			\item[$\bullet$] $F \square G$ holds for {\bf small arguments} if there is $T > 0$ such that $F(t) \square G(t)$ for all $0 \leqslant t \leqslant T$;
			\item[$\bullet$] $F \square G$ holds for {\bf large arguments} if there is $T > 0$ such that $F(t) \square G(t)$ for all $t \geqslant T$;
			\item[$\bullet$] $F \square G$ holds for {\bf all arguments} if $F(t) \square G(t)$ for all $t \geqslant 0$.
		\end{enumerate}
		}
	\end{notation}
	
	\begin{remark}
		Plainly, if both $F$ and $G$ are Young functions then $F \square G$ holds for all arguments if, and only if,
		$F \square G$ holds for small and large arguments.
	\end{remark}
	
	Let us get to the point.
	The first of the lemmas we need is well-known.
	(However, many partial results in this direction are scattered throughout the literature; see, for example, \cite[Theorem~1]{And60},
	\cite[pp.~63--68]{Dan74}, \cite[Theorems~13.7 and 13.8]{KR61}, \cite[pp.~69--75]{Mal04}, \cite[Section~VI]{ONe65} and \cite[Theorem~8]{ZR67}.)

	\begin{lemma} \label{LEMMA : 1}
		{\it Let $X$ be a Banach ideal space with the Fatou property.
		Further, let $F$, $G$ and $H$ be three Young functions.
		Suppose that one of the following three conditions holds:}
		\begin{enumerate}
			\item[(1)] {\it $F^{-1} G^{-1} \approx H^{-1}$ for all arguments and neither $L_{\infty} \hookrightarrow X$ nor $X \hookrightarrow L_{\infty}$;}
			\item[(2)] {\it $F^{-1} G^{-1} \approx H^{-1}$ for large arguments and $L_{\infty} \hookrightarrow X$;}
			\item[(3)] {\it $F^{-1} G^{-1} \approx H^{-1}$ for small arguments and $X \hookrightarrow L_{\infty}$.}
		\end{enumerate}
		{\it Then $X_F \odot X_G = X_H$.}
	\end{lemma}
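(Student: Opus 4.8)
The plan is to prove the two embeddings $X_H \hookrightarrow X_F \odot X_G$ and $X_F \odot X_G \hookrightarrow X_H$ directly at the level of the modulars $\mathcal{M}_F,\mathcal{M}_G,\mathcal{M}_H$, using the norm--modular relations \eqref{EQ : norm-modular relation 1}--\eqref{EQ : norm-modular relation 3} together with two textbook facts about a Young function and its right-continuous inverse, namely $F(F^{-1}(u)) \leqslant u$ and $F^{-1}(F(s)) \geqslant s$. The engine of the whole argument is simply the hypothesis $F^{-1} G^{-1} \approx H^{-1}$, which — once we know that the arguments involved stay in the range where the equivalence is assumed to hold — lets us pass back and forth between $H^{-1}(u)$ and the product $F^{-1}(u)\,G^{-1}(u)$.

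For $X_H \hookrightarrow X_F \odot X_G$ I would take $f \in X_H$ with $\norm{f}_{X_H} \leqslant \tfrac12$, put $u := H(\abs{f})$ (so $\norm{u}_X \leqslant \tfrac12$ by \eqref{EQ : norm-modular relation 3}), and factor $f = gh$ with $h := G^{-1}(u)$ and $g := f/h$. Then $\abs{f} \leqslant H^{-1}(u) \leqslant C\,F^{-1}(u)\,G^{-1}(u)$ forces $\abs{g} \leqslant C\,F^{-1}(u)$, hence $F(\abs{g}/C) \leqslant u$ and $G(\abs{h}) \leqslant u$ pointwise, so $\mathcal{M}_F(g/C) \leqslant \norm{u}_X \leqslant 1$ and $\mathcal{M}_G(h) \leqslant \norm{u}_X \leqslant 1$; thus $\norm{f}_{X_F \odot X_G} \leqslant \norm{g}_{X_F}\norm{h}_{X_G} \leqslant C$. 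For the reverse embedding, given $f = gh$ with $\norm{g}_{X_F},\norm{h}_{X_G} \leqslant 1$, I set $a := F(\abs{g})$ and $b := G(\abs{h})$ (so $\norm{a}_X,\norm{b}_X \leqslant 1$ by \eqref{EQ : norm-modular relation 2}) and estimate
\begin{equation*}
	\abs{f} = \abs{g}\,\abs{h} \leqslant F^{-1}(a)\,G^{-1}(b) \leqslant F^{-1}(a+b)\,G^{-1}(a+b) \leqslant C\,H^{-1}(a+b)
\end{equation*}
by monotonicity of the inverses; convexity of $H$ then gives $H(\abs{f}/(2C)) \leqslant \tfrac12 H(\abs{f}/C) \leqslant \tfrac12(a+b)$, whence $\mathcal{M}_H(f/(2C)) \leqslant \tfrac12\norm{a+b}_X \leqslant 1$ and $\norm{f}_{X_H} \leqslant 2C$.

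The one point that genuinely requires care — and where hypotheses (1)--(3) enter — is making sure the equivalence $F^{-1} G^{-1} \approx H^{-1}$ is invoked only where it is assumed, i.e. at the arguments $u = H(\abs{f})$ (first embedding) and $w = a+b$ (second embedding). If $X \hookrightarrow L_{\infty}$, then every function in the unit ball of $X$ is bounded by $\norm{X \hookrightarrow L_{\infty}}$, so those arguments all lie in a fixed interval $[0,T]$ and only the small-argument behaviour of (3) is used. If $L_{\infty} \hookrightarrow X$, then $\mathbf{1} \in X$, the spaces $X_F, X_G, X_H$ all contain $L_{\infty}$, and $L_{\infty} \odot X_G = X_G$; here I would split the underlying set as $\{w \leqslant T\} \cup \{w > T\}$ (and similarly $\{u \leqslant T\}\cup\{u>T\}$), noting that on the first piece $f$ is bounded and hence automatically lies in $X_F \odot X_G$ (resp. in $X_H$), while on the second piece the estimates above apply with the large-argument equivalence of (2). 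If neither inclusion holds, the arguments range over all of $(0,\infty)$ and one needs the equivalence for all arguments, exactly as in (1). Finally, the Fatou property of $X$ (inherited by $X_F, X_G, X_H$) is what lets one pass from bounded or simple functions to arbitrary elements by truncation. I expect this argument-range bookkeeping, together with the minor separate treatment of the degenerate cases (some of $F, G, H$ vanishing on an initial interval or jumping to infinity, so that an inverse may attain the value $\infty$), to be the most delicate part; the rest is a routine manipulation of modulars in the spirit of \cite{And60}, \cite{Dan74}, \cite{KR61}, \cite{Mal04}, \cite{ONe65} and \cite{ZR67}.
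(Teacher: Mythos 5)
The paper does not actually prove Lemma~\ref{LEMMA : 1}: its \enquote{proof} is a single sentence citing Theorems~4.1, 4.2, 4.5 of \cite{KLM12} and Theorem~5(a) of \cite{KLM14}. Your proposal is therefore a genuinely different route in the narrow sense that the paper outsources the work to the literature, whereas you give a self-contained argument; but the argument you sketch is precisely the standard one underlying those cited results (factor $f = gh$ with $h := G^{-1}(H(\abs{f}))$ for one embedding, pass through $a+b := F(\abs{g})+G(\abs{h})$ for the other, and shuttle between norms and modulars via \eqref{EQ : norm-modular relation 1}--\eqref{EQ : norm-modular relation 3}). Modulo details, it is correct. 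The one spot that deserves an explicit extra line is the argument-range bookkeeping in cases (2) and (3): the threshold $T$ in the definition of \enquote{for small/large arguments} is a priori unrelated to the bound $T' := \norm{X \hookrightarrow L_\infty}$ (resp. the cut-off level you split at), so you need the easy observation that an equivalence $F^{-1}G^{-1} \approx H^{-1}$ on $[0,T]$ extends, with a possibly worse constant, to $[0,T']$ for any finite $T'$, because $F^{-1}$, $G^{-1}$, $H^{-1}$ are non-decreasing, right-continuous and strictly positive on $(0,\infty)$, hence the relevant ratio is bounded above and below on $[T,T']$. Likewise, the degenerate cases (an inverse hitting $0$ or $\infty$) resolve as you expect: on $\{u = 0\}$ either $f$ vanishes or the equivalence at the origin forces $G^{-1}(0) > 0$, so $g := f/h$ remains well defined. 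With those remarks filled in, the proof is complete and, I would say, a worthwhile addition, since the paper leaves the reader to chase three references.
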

	\begin{proof}
		This is just Theorems~4.1, 4.2 and 4.5 from \cite{KLM12} in tandem with Theorem~5(a) from \cite{KLM14}
		(see also \cite[Theorem~A(a)]{KLM14}; cf. \cite[Theorem~10.1, p.~69]{Mal04}).
	\end{proof}

	The {\it crux} of the entire route is proof of the following

	\begin{lemma} \label{LEMMA : 2}
		{\it Let $X$ be a rearrangement invariant function space.
		Further, let $F$, $G$ and $H$ be three Young functions.
		Suppose that $X_H \hookrightarrow X_F \odot X_G$. Then}
		\begin{itemize}
			\item[($1$)] {\it $H^{-1} \preccurlyeq F^{-1} G^{-1}$ for small arguments provided $L_{\infty} \not\hookrightarrow X$;}
			\item[($2$)] {\it $H^{-1} \preccurlyeq F^{-1} G^{-1}$ for large arguments provided $X \not\hookrightarrow L_{\infty}$;}
			\item[($3$)] {\it $H^{-1} \preccurlyeq F^{-1} G^{-1}$ for all arguments provided neither $L_{\infty} \hookrightarrow X$ nor $X \hookrightarrow L_{\infty}$.}
		\end{itemize}
	\end{lemma}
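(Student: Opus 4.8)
The plan is to squeeze everything out of the hypothesis by testing the embedding on the characteristic functions ${\bf 1}_{[0,t)}$, $t \in \mathbb{I}$, and then translating the resulting relation between fundamental functions through the explicit formulas \eqref{EQ : fundamental function of X_F} and \eqref{EQ : fundamental function of product}. Concretely, $X_H \hookrightarrow X_F \odot X_G$ furnishes a constant $C > 0$ with $\norm{f}_{X_F \odot X_G} \leqslant C \norm{f}_{X_H}$ for all $f$, so in particular $\psi_{X_F \odot X_G}(t) \leqslant C\, \psi_{X_H}(t)$ for every $t \in \mathbb{I}$. Since $X$ is rearrangement invariant, so are $X_F$, $X_G$ and $X_H$; hence \eqref{EQ : fundamental function of product} gives $\psi_{X_F \odot X_G} = \psi_{X_F} \psi_{X_G}$, while \eqref{EQ : fundamental function of X_F} gives $\psi_{X_F}(t) = 1 / F^{-1}(1/\psi_X(t))$ and the two analogous identities for $G$ and $H$. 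Because $0 < \psi_X(t) < \infty$ for every finite $t > 0$, nothing degenerates, and after clearing denominators the embedding becomes precisely
\begin{equation*}
	H^{-1}\!\left( 1/\psi_X(t) \right) \leqslant C\, F^{-1}\!\left( 1/\psi_X(t) \right) G^{-1}\!\left( 1/\psi_X(t) \right) \quad \text{for every } t \in \mathbb{I}.
\end{equation*}

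Next I would set $s = 1/\psi_X(t)$ and determine which values of $s$ are reached. The fundamental function $\psi_X$ is quasi-concave on $\mathbb{I}$, hence non-decreasing and continuous on the interior, so $s$ runs through a (sub)interval of $[0,\infty)$. Its supremum equals $1/\psi_X(0^{+})$, which is $+\infty$ exactly when $\psi_X$ vanishes at zero — that is, as recalled in the Toolbox, exactly when $X \not\hookrightarrow L_{\infty}$. Its infimum equals $1/\lim_{t\to\infty}\psi_X(t)$ when $\mathbb{I} = [0,\infty)$ (and $1/\psi_X(1) > 0$ when $\mathbb{I} = [0,1]$); since $\lim_{t\to\infty}\psi_X(t) = \norm{{\bf 1}}_X$ by the Fatou property, this infimum is $0$ exactly when $\norm{{\bf 1}}_X = \infty$, i.e.\ exactly when $L_{\infty} \not\hookrightarrow X$. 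Feeding this back into the displayed inequality yields the three cases: if $X \not\hookrightarrow L_{\infty}$ it holds for all large $s$, which is (2); if $L_{\infty} \not\hookrightarrow X$ it holds for all small $s$, which is (1); and if both fail it holds for every $s \geqslant 0$ — one combines the two one-sided conclusions and uses continuity at $s = 0$ — which is (3). (On $\mathbb{I} = [0,1]$ one always has $L_{\infty} \hookrightarrow X$, so only case (2) is non-vacuous there, consistent with the fact that only large arguments are produced.)

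The part I expect to require genuine care is the bookkeeping in the second paragraph rather than anything deep. One has to check that monotonicity and continuity of $\psi_X$ really deliver a whole half-line (or all of $[0,\infty)$) of admissible arguments, not merely a sequence accumulating at $0$ or at $\infty$; one has to run the two conventions $\mathbb{I} = [0,1]$ and $\mathbb{I} = [0,\infty)$ in parallel so that no case is silently dropped; and one has to watch the boundary behaviour of $F^{-1}$, $G^{-1}$, $H^{-1}$ near $0$ and near $b_F$, $b_G$, $b_H$, so that clearing denominators in the passage to the displayed inequality remains legitimate even when one of the three functions jumps to infinity.
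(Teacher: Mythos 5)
Your proof is correct and, at bottom, is the paper's own argument recast directly instead of by contradiction: the paper also tests the embedding on indicators ($h_n = F^{-1}(a_n)G^{-1}(a_n)\mathbf{1}_{A_n}$, with $A_n$ chosen so that $a_n\psi_X(m(A_n))=1$) and invokes \eqref{EQ : fundamental function of product} in exactly the spot where you multiply fundamental functions, while its modular estimates on $f_n$ and $g_n$ are just the norm computation $\psi_{X_F}(t)=1/F^{-1}(1/\psi_X(t))$ in disguise. The one place the paper is more careful is precisely the point you flag yourself: by working with modulars and never ``clearing denominators,'' it avoids any division when $F^{-1}$, $G^{-1}$ or $H^{-1}$ is degenerate at $0$ or plateaus beyond $b_F$, $b_G$, $b_H$, and the contradiction form ($2^n$ in place of ``no constant works'') keeps the quantifier bookkeeping tidy; if you tighten that paragraph along those lines your version is complete.
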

	\begin{proof}
		We will only show (1) in details.
		Once this is done, the proof of (2) is completely analogous.
		Moreover, the proof of (3) is just a simple combination of (1) and (2).
		
		Suppose that $L_{\infty} \not\hookrightarrow X$ and the condition $H^{-1} \preccurlyeq F^{-1} G^{-1}$ does not holds for small arguments.
		This means that there is a decreasing null sequence $\{ a_n \}_{n=1}^{\infty}$ of reals with
		\begin{equation} \label{EQ : a_n}
			0 < a_n < \min\left\{ 1, (F^{-1})^{-1}(b_F) \right\}
		\end{equation}
		for all $n \in \mathbb{N}$,
		where $(F^{-1})^{-1}(b_F) = \inf \{ t > 0 \colon F^{-1}(t) = b_F \}$, and
		\begin{equation} \label{EQ : brak rownowaznosci male}
			2^n F^{-1}(a_n) G^{-1}(a_n) \leqslant H^{-1}(a_n)
		\end{equation}
		for all $n \in \mathbb{N}$.
		Keeping this in mind, we claim that there is a sequence $\{ A_n \}_{n=1}^{\infty}$ of sets of positive but finite measure
		such that $\norm{a_n {\bf 1}_{A_n}}_{X} = 1$.
		To see this, just note that the function $\psi_X$ is continuous and unbounded.
		(Otherwise, $L_{\infty} \hookrightarrow X$, which is obviously not the case.)
		Thus, thanks to the Darboux property of continuous functions, we can effortlessly find $A_n$'s with $\norm{a_n {\bf 1}_{A_n}}_{X} = 1$ for $n \in \mathbb{N}$.
		Our claim follows.
		Knowing this, for $n \in \mathbb{N}$, we set
		\begin{equation*}
			f_n \coloneqq F^{-1}(a_n) {\bf 1}_{A_n}, \quad g_n \coloneqq G^{-1}(a_n) {\bf 1}_{A_n} \quad \text{ and } \quad h_n \coloneqq f_n g_n. 
		\end{equation*}
		Now, for $0 < \lambda < 1$, we have
		\begin{align*}
			\mathcal{M}_F(f_n / \lambda)
				& = \norm{F[\lambda^{-1} F^{-1}(a_n)] {\bf 1}_{A_n}}_X \\
				& \geqslant \lambda^{-1} \norm{(F \circ F^{-1})(a_n) {\bf 1}_{A_n}}_X \quad \quad (\text{due to the convexity of $F$}) \\
				& = \lambda^{-1} \norm{a_n {\bf 1}_{A_n}}_X \quad \quad (\text{thanks to \eqref{EQ : a_n}, $(F \circ F^{-1})(a_n) = a_n$}) \\
				& > 1.
		\end{align*}
		This means that $\norm{f_n}_{X_F} \geqslant 1$.
		By reasoning in exactly the same way, we can show that also $\norm{g_n}_{X_F} \geqslant 1$.
		In consequence, we have
		\begin{align*}
			\norm{h_n}_{X_F \odot X_G}
				& = F^{-1}(a_n) \, G^{-1}(a_n) \, \psi_{X_F \odot X_G}(m(A_n)) \\
				& = F^{-1}(a_n) \, \psi_{X_F}(m(A_n)) \, G^{-1}(a_n) \, \psi_{X_G}(m(A_n)) \quad \quad (\text{by \eqref{EQ : fundamental function of product}}) \\
				& = \norm{f_n}_{X_F} \norm{g_n}_{X_F} \geqslant 1.
		\end{align*}
		Thus $\norm{h_n}_{X_F \odot X_G}$ is \enquote{big}.
		Now we just need to show that $\norm{h_n}_{X_H}$ can be arbitrarily \enquote{small}.
		We have
		\begin{align*}
			\mathcal{M}_H \left( 2^n h_n \right)
				& = \norm{H \left[ 2^n F^{-1}(a_n) G^{-1}(a_n) \right] {\bf 1}_{A_n}}_X \\
				& \leqslant \norm{(H \circ H^{-1})(a_n) {\bf 1}_{A_n}}_X \quad \quad (\text{in view of \eqref{EQ : brak rownowaznosci male}}) \\
				& \leqslant \norm{a_n {\bf 1}_{A_n}}_X \quad \quad (\text{see \cite[Property~1.3]{ONe65}}) \\
				& \leqslant 1.
		\end{align*}
		In consequence, $\norm{h_n}_{X_H} \leqslant 1 / 2^n$.
		Putting this two facts together,
		\begin{equation*}
			2^n \norm{h_n}_{X_H} \leqslant \norm{h_n}_{X_F \odot X_G}
		\end{equation*}
		for $n \in \mathbb{N}$, so $X_H \not\hookrightarrow X_F \odot X_G$.
	\end{proof}

	The last lemma is quite interesting.
	This is almost literally \cite[Theorem~5(d)]{KLM14}.
	However, a very slight change in the proof resulted in drastic improvement\footnote{Strictly speaking, instead of assuming that the space
	$X$ is separable, which rules out a lot of spaces like, for example, Marcinkiewicz sequence spaces $m_{\psi}$ or Orlicz sequence spaces $\ell_F$
	without the $\Delta_2$-condition, we merely assume that $X$ is not $\ell_{\infty}$.} of the result. 
	
	\begin{lemma} \label{LEMMA : 3}
		{\it Let $X$ be a rearrangement invariant sequence space.
			Further, let $F$, $G$ and $H$ be three Young functions.
			Suppose that $\ell_{\infty} \not\hookrightarrow X$ and $X_H \hookrightarrow X_F \odot X_G$.
			Then $H^{-1} \preccurlyeq F^{-1} G^{-1}$ for small arguments.}
	\end{lemma}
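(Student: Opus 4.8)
The plan is to transcribe the proof of Lemma~\ref{LEMMA : 2}(1) almost word for word; the sole point that needs a new idea is the construction of the carrier sets $A_n$, since the Darboux property of the (continuous) fundamental function that was used there is no longer at our disposal in a sequence space. So assume, towards a contradiction, that $\ell_{\infty} \not\hookrightarrow X$, that $X_H \hookrightarrow X_F \odot X_G$, but that $H^{-1} \preccurlyeq F^{-1} G^{-1}$ fails for small arguments. Exactly as in the proof of Lemma~\ref{LEMMA : 2}, this yields a strictly decreasing null sequence $\{ a_n \}_{n=1}^{\infty}$ satisfying the range condition \eqref{EQ : a_n} and the inequality \eqref{EQ : brak rownowaznosci male}; throwing away finitely many initial terms, we may also assume that $a_n \psi_X(1) < 1$ for every $n$.

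Here is the only genuinely new step. Since $\ell_{\infty} \not\hookrightarrow X$, the all-ones sequence is not in $X$, so by the Fatou property the non-decreasing sequence $\{ \psi_X(k) \}_{k=1}^{\infty}$ is unbounded, hence $\psi_X(k) \to \infty$; and, $\psi_X$ being quasi-concave, $\psi_X(k+1) \leqslant \tfrac{k+1}{k} \psi_X(k) \leqslant 2 \psi_X(k)$ for every $k \geqslant 1$. Therefore, letting $k_n$ be the least integer with $a_n \psi_X(k_n) \geqslant 1$ — which exists, and satisfies $k_n \geqslant 2$ because $a_n \psi_X(1) < 1$ — minimality gives $a_n \psi_X(k_n - 1) < 1$, whence
\begin{equation*}
	1 \leqslant a_n \psi_X(k_n) \leqslant 2\, a_n\, \psi_X(k_n - 1) < 2.
\end{equation*}
Taking $A_n$ to be any set consisting of exactly $k_n$ atoms, we get $1 \leqslant \norm{a_n {\bf 1}_{A_n}}_X < 2$. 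Up to the harmless factor $2$ on the right, this is the discrete analogue of the normalisation $\norm{a_n {\bf 1}_{A_n}}_X = 1$ used in Lemma~\ref{LEMMA : 2}.

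From this point the proof is a routine transcription. Put $f_n \coloneqq F^{-1}(a_n) {\bf 1}_{A_n}$ and $g_n \coloneqq G^{-1}(a_n) {\bf 1}_{A_n}$. Since $1/\psi_X(k_n) \leqslant a_n$ and $F^{-1}, G^{-1}$ are non-decreasing, formula \eqref{EQ : fundamental function of X_F} gives
\begin{equation*}
	\norm{f_n}_{X_F} = \frac{F^{-1}(a_n)}{F^{-1}(1/\psi_X(k_n))} \geqslant 1
	\qquad \text{and} \qquad
	\norm{g_n}_{X_G} = \frac{G^{-1}(a_n)}{G^{-1}(1/\psi_X(k_n))} \geqslant 1,
\end{equation*}
so, with $h_n \coloneqq f_n g_n$, formula \eqref{EQ : fundamental function of product} yields $\norm{h_n}_{X_F \odot X_G} = \norm{f_n}_{X_F} \norm{g_n}_{X_G} \geqslant 1$. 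On the other hand, by \eqref{EQ : brak rownowaznosci male} and the inequality $(H \circ H^{-1})(a_n) \leqslant a_n$ (see \cite[Property~1.3]{ONe65}) we have $\mathcal{M}_H(2^n h_n) \leqslant \norm{a_n {\bf 1}_{A_n}}_X < 2$, and convexity of the modular then gives $\mathcal{M}_H(2^{n-1} h_n) < 1$, so that $\norm{h_n}_{X_H} \leqslant 2^{1-n}$ by \eqref{EQ : norm-modular relation 2}. Hence $2^{n-1} \norm{h_n}_{X_H} \leqslant \norm{h_n}_{X_F \odot X_G}$ for all $n$, which is incompatible with the embedding $X_H \hookrightarrow X_F \odot X_G$. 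The main — indeed the only — obstacle compared with Lemma~\ref{LEMMA : 2} is the discrete selection of $k_n$; the quasi-concavity of $\psi_X$ is exactly what keeps $\norm{a_n {\bf 1}_{A_n}}_X$ within a bounded range, and the rest goes through verbatim.
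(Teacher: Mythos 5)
Your proof is correct and takes essentially the same route as the paper's: both reduce the whole matter to replacing the Darboux-property argument of Lemma~\ref{LEMMA : 2} with a discrete selection of carrier sets, and both use the quasi-concavity of $\psi_X$ (via $\psi_X(k+1) \leqslant \tfrac{k+1}{k}\psi_X(k)$) to keep $\norm{a_n {\bf 1}_{A_n}}_X$ trapped in a fixed bounded interval. The paper picks $M(n)$ as the \emph{largest} index with $a_n\psi_X(M(n)) \leqslant 1$ (so the norm lands in $[1/2,1]$ and the norm bounds come out as $1/3 \leqslant \norm{x_n}_{X_F} \leqslant 1$ via modular estimates), whereas you pick $k_n$ as the \emph{smallest} index with $a_n\psi_X(k_n) \geqslant 1$ (landing in $[1,2)$) and then read the norms off directly from the fundamental-function formula \eqref{EQ : fundamental function of X_F}; these are cosmetic variants of the same idea, and your version is if anything slightly tidier.
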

	\begin{proof}
		Suppose that $\ell_{\infty} \not\hookrightarrow X$ and $H^{-1} \preccurlyeq F^{-1} G^{-1}$ does not hold for small arguments.
		This means that there is a null sequence $\{ a_n \}_{n=1}^{\infty}$ of positive integers such that
		\begin{equation*}
			2^n F^{-1}(a_n) G^{-1}(a_n) \leqslant H^{-1}(u_n)
		\end{equation*}
		for all $n \in \mathbb{N}$.
		Since $\ell_{\infty} \not\hookrightarrow X$, so $\lim_{n \rightarrow \infty} \norm{\sum_{i=1}^{n} e_n}_X = \infty$.
		(Note also that the Fatou property is important here, because otherwise $c_0$ is a simple counter-example.)
		In consequence, for every $n \in \mathbb{N}$, there exists $M(n) \in \mathbb{N}$ such that
		\begin{equation} \label{EQ : definition ciag}
			a_n \norm{ \sum_{i=1}^{M(n)} e_i }_X \leqslant 1 < a_n \norm{\sum_{i=1}^{M(n) + 1} e_i}_X.
		\end{equation}
		Since any fundamental function is quasi-concave, so $\psi_X$ is increasing, while
		\begin{equation*}
			n \mapsto \frac{\psi_X(n)}{n} = \frac{\norm{ \sum_{i=1}^{n} e_i }_X}{n}
		\end{equation*}
		is non-increasing (see \cite[Corollary~5.3, p.~67]{BS88}).
		In consequence, for $n \in \mathbb{N}$,
		\begin{equation} \label{EQ : obserwacja z tablicy}
			1 \geqslant \frac{\psi_X(n)}{\psi_X(n+1)} \geqslant \frac{n}{n+1} \geqslant \frac{1}{2}.
		\end{equation}
		Thus, combining \eqref{EQ : definition ciag} with \eqref{EQ : obserwacja z tablicy}, we see that
		\begin{equation} \label{EQ : istotne nierownosci}
			\frac{1}{2} \leqslant a_n \norm{ \sum_{i=1}^{M(n)} e_i }_X \leqslant 1.
		\end{equation}
		From this point on, the rest of the argument is the same to the letter.
		However, instead of referring to Lemma~\ref{LEMMA : 2} (or \cite[Theorem~5(d)]{KLM14}), let us finish what we started.
		Set
		\begin{equation*}
			x_n \coloneqq F^{-1}(a_n) \sum_{i=1}^{M(n)} e_i, \quad y_n \coloneqq G^{-1}(a_n) \sum_{i=1}^{M(n)} e_i \quad \text{ and } \quad z_n \coloneqq x_n y_n. 
		\end{equation*}
		Using \eqref{EQ : istotne nierownosci}, we have
		\begin{equation*}
			\mathcal{M}_F(x_n) \leqslant a_n \norm{\sum_{i=1}^{M(n)} e_i}_X \leqslant 1
		\end{equation*}
		and
		\begin{equation*}
			\mathcal{M}_F(3 x_n)
				= F\left[ 3 F^{-1}(a_n) \right] \norm{\sum_{i=1}^{M(n)} e_i}_X
				\geqslant 3 a_n \norm{\sum_{i=1}^{M(n)} e_i}_X > 1.
		\end{equation*}
		In other words, $1/3 \leqslant \norm{x_n}_{X_F} \leqslant 1$ for $n \in \mathbb{N}$.
		Repeating the same argument for $y_n$'s in place of $x_n$'s we get $1/3 \leqslant \norm{y_n}_{X_G} \leqslant 1$ for $n \in \mathbb{N}$.
		Now, it is easy to see that $\norm{z_n}_{X_F \odot X_G} \geqslant 1/9$ and $\norm{z_n}_{X_H} \leqslant 1/2^n$ for $n \in \mathbb{N}$.
		Thus, for $n \in \mathbb{N}$,
		\begin{equation*}
			2^n \norm{z_n}_{X_H} \leqslant 9 \norm{z_n}_{X_F \odot X_G}
		\end{equation*}
		and $X_H \not\hookrightarrow X_F \odot X_G$.
	\end{proof}

	\subsection{Factorization of Calder{\' o}n--Lozanovski{\u \i} spaces} \label{SUBSECTION : On factorization}
	
	Before the {\it grand finale}, we need one more definition.
	
	For a given two Banach ideal spaces $X$ and $Y$, we will say that $X$ is {\bf $Y$-perfect} if $M(M(X,Y),Y) = X$.
	Incidentally, $L_1$-perfectness of $X$ is exactly the Fatou property under disguise.
	To see this, it is enough to compare the equality
	\begin{equation*}
		M(M(X,L_1),L_1) = M(X^{\times},L_1) = X^{\times \times}
	\end{equation*}
	with the well-known fact that $X^{\times \times} = X$ if, and only if, the space $X$ has the Fatou property.
	Moreover, note that if $X$ has the Fatou property and $Y$ factorizes through $X$ then $X$ is $Y$-perfect.
	Indeed, using the so-called \enquote{cancellation} property (see, for example, \cite[Theorem~4]{KLM14}),
	we infer that 
	\begin{equation*}
		M(M(X,Y), Y)
			= M(L_{\infty} \odot M(X,Y), X \odot M(X,Y))
			= M(L_{\infty}, X)
			= X.
	\end{equation*}
	Alas, in general, there is no hope for the reverse implication to hold
	(in fact, there is a three-dimensional counter-example by Bolob{\' a}s and Brightwell; see \cite[Example~3.6]{Sch10} for a detailed presentation).
	
	Now, we are ready to show the following
	
	\begin{theorem}[Factorization of Calder{\' o}n--Lozanovski{\u \i} spaces] \label{THEOREM : Factorization of CL}
		{\it Let $X$ be a rearrangement invariant space such that $X \neq L_{\infty}$.
		Further, let $F$ and $G$ be two Young functions.
		Then the space $X_G$ can be factorized through $X_F$, that is, $X_F \odot M(X_F,X_G) = X_G$ if, and only if,
		one the following four conditions holds:}
		\begin{enumerate}
			\item[(1)] {\it $F^{-1}(G \ominus F)^{-1} \approx G^{-1}$ for all arguments and neither $L_{\infty} \hookrightarrow X$ nor $X \hookrightarrow L_{\infty}$;}
			\item[(2)] {\it $F^{-1}(G \ominus F)^{-1} \approx G^{-1}$ for large arguments and $L_{\infty} \hookrightarrow X$;}
			\item[(3)] {\it $F^{-1}(G \ominus F)^{-1} \approx G^{-1}$ for small arguments, $X \hookrightarrow L_{\infty}$ and the triple $(X,F,G)$ is nice;}
			\item[(4)] {\it $F^{-1}(G \ominus_1 F)^{-1} \approx G^{-1}$ for small arguments and either the triple $(X,F,G)$
			fails to be nice or $X$ is a sequence space.}
		\end{enumerate}
		{\it In particular, in this situation, the space $X_F$ is $X_G$-perfect.}
	\end{theorem}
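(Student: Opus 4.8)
The plan is to reduce everything to the two main theorems of Section~\ref{SECTION : Main results} plus the technical Lemmas~\ref{LEMMA : 1}--\ref{LEMMA : 3}. The first move is to trade the opaque space $M(X_F,X_G)$ for a concrete Calder{\'o}n--Lozanovski{\u\i} space: by Theorem~\ref{THEOREM : PM between CL}, $M(X_F,X_G)=X_{G\ominus F}$ when the triple $(X,F,G)$ is nice, and by Theorem~\ref{THEOREM : main thm not nice}, $M(X_F,X_G)=X_{G\ominus_1 F}$ when it is not nice or $X$ is a sequence space. Writing $H$ for whichever of $G\ominus F$, $G\ominus_1 F$ is relevant, the factorization $X_F\odot M(X_F,X_G)=X_G$ becomes \emph{equivalent} to $X_F\odot X_H=X_G$. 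Here one inclusion is free: since $\norm{fg}_{X_G}\leqslant\norm{f}_{X_F}\norm{g}_{M(X_F,X_G)}$ for $f\in X_F$ and $g\in M(X_F,X_G)$ by the H{\"o}lder--Rogers inequality \eqref{EQ : general Holder--Rogers}, taking the infimum over factorizations gives $X_F\odot X_H\hookrightarrow X_G$ unconditionally. So the whole matter collapses to: when does $X_G\hookrightarrow X_F\odot X_H$ hold?

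For \textbf{sufficiency}, suppose one of the conditions (1)--(4) holds. Each of them asserts $F^{-1}H^{-1}\approx G^{-1}$ on exactly the range of arguments (small, large, or all) dictated by the position of $X$ relative to $L_\infty$; using that $X\hookrightarrow L_\infty\iff\psi_X(0^+)>0$ and $L_\infty\hookrightarrow X\iff\psi_X(\infty)<\infty$ (so that $u=1/\psi_X(t)$ sweeps out, respectively, a neighbourhood of $0$, a neighbourhood of $\infty$, or all of $(0,\infty)$), one checks that this is precisely the hypothesis of the matching item of Lemma~\ref{LEMMA : 1}, applied with the Young functions $F,H,G$ in place of its $F,G,H$. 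In cases (3) and (4) the side conditions --- the triple being nice, respectively failing to be nice or $X$ a sequence space --- are exactly what certifies that the $H$ we picked is the one coming out of Theorem~A, so that $X_F\odot X_H$ genuinely equals $X_F\odot M(X_F,X_G)$. Lemma~\ref{LEMMA : 1} then delivers $X_F\odot X_H=X_G$, i.e.\ the factorization.

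For \textbf{necessity}, assume $X_F\odot X_H=X_G$; in particular $X_G\hookrightarrow X_F\odot X_H$. Feeding this into Lemma~\ref{LEMMA : 2} (when $X$ is a function space) or Lemma~\ref{LEMMA : 3} (when $X$ is a sequence space, where $\ell_\infty\not\hookrightarrow X$ since $X\neq L_\infty$), with the roles of the last two Young functions swapped, yields $G^{-1}\preccurlyeq F^{-1}H^{-1}$ on the relevant range. The opposite estimate $F^{-1}H^{-1}\preccurlyeq G^{-1}$ I would extract from the generalized Young inequality \eqref{EQ : generalized Young inequality}: with $s=F^{-1}(u)$, $t=H^{-1}(u)$ and $F(F^{-1}(u))\leqslant u$, $H(H^{-1}(u))\leqslant u$ (see \cite[Property~1.3]{ONe65}) one gets $G(st)\leqslant 2u$, hence $st\leqslant G^{-1}(2u)\leqslant 2G^{-1}(u)$ by concavity of $G^{-1}$; for $H=G\ominus_1 F$ the constraint $s\leqslant 1$ only restricts $u$ to small values, which is exactly the range needed. (Alternatively this inequality can be read off the free embedding $X_F\odot X_H\hookrightarrow X_G$ by comparing fundamental functions through \eqref{EQ : fundamental function of product} and \eqref{EQ : fundamental function of X_F}.) Combining the two estimates gives $F^{-1}H^{-1}\approx G^{-1}$ on the relevant range, which --- after sorting out case by case which of $X\hookrightarrow L_\infty$, $L_\infty\hookrightarrow X$, neither, or ``$X$ a sequence space'' holds, and recalling that for $X\neq L_\infty$ with $(X,F,G)$ nice one necessarily has $H=G\ominus F$ --- is precisely one of (1)--(4). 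Finally, the ``$X_G$-perfect'' clause is immediate: whenever the factorization holds, $X_F$ has the Fatou property (inherited from $X$) and $X_G$ factorizes through $X_F$, so the remark preceding the theorem, which rests on the cancellation property \cite[Theorem~4]{KLM14}, gives $M(M(X_F,X_G),X_G)=X_F$.

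The step I expect to be the main obstacle is not any single estimate but the bookkeeping: one has to thread the ``small/large/all arguments'' distinctions through the three dichotomies attached to $X$ and keep them compatible simultaneously with Theorem~A and with Lemmas~\ref{LEMMA : 1}--\ref{LEMMA : 3}. In particular, the genuinely delicate spot is the degenerate regime where $G$ jumps to infinity while $F$ is finite: there $H^{-1}$ may vanish off the origin, $X_H$ collapses to $\{0\}$, the factorization fails, and one must verify that in that situation none of (1)--(4) can be satisfied --- so that the stated equivalence really is an equivalence.
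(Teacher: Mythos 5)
Your proposal is correct and follows essentially the same route as the paper's proof: trade $M(X_F,X_G)$ for $X_{G\ominus F}$ or $X_{G\ominus_1 F}$ via Theorems~\ref{THEOREM : PM between CL} and \ref{THEOREM : main thm not nice}, apply Lemma~\ref{LEMMA : 1} for sufficiency and Lemma~\ref{LEMMA : 2} (or \ref{LEMMA : 3}) with roles permuted for the estimate $G^{-1}\preccurlyeq F^{-1}H^{-1}$, and obtain the reverse estimate $F^{-1}H^{-1}\preccurlyeq G^{-1}$ from the generalized Young inequality; the only cosmetic difference is that you spell out this last step directly where the paper cites \cite[Theorem~6.1]{ONe65}, and you additionally point out (correctly, though it is not strictly needed) that the embedding $X_F\odot M(X_F,X_G)\hookrightarrow X_G$ is automatic from H{\"o}lder--Rogers.
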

	\begin{proof}
		Let $X$, $F$ and $G$ be as above.
		We will only explain how to prove $(1)$, because the rest is essentially the same.
		
		Suppose that $X_F \odot M(X_F,X_G) = X_G$.
		Since we are only interested in (1) anyway, so we can assume that neither $L_{\infty} \hookrightarrow X$ nor $X \hookrightarrow L_{\infty}$.
		Then, due to our assumption that $X \neq L_{\infty}$, so Lemma~\ref{LEMMA : 2} teaches us that $G^{-1} \preccurlyeq F^{-1} (G \ominus F)^{-1}$
		for all arguments.
		Therefore, it only remains to explain that also $F^{-1} (G \ominus F)^{-1} \preccurlyeq G^{-1}$ for all arguments.
		To see this, let us recall that the conjugate function $G \ominus F$ always satisfies generalized Young's inequality,
		that is, $G(st) \leqslant (G \ominus F)(t) + F(s)$ for $s,t \geqslant 0$ (see \eqref{DEF : generalized Young conjugate}).
		Now, it is enough to invoke Theorem~6.1 from \cite{ONe65} (see also \cite[pp.~892--893]{KLM12} and \cite[Remark~6]{KLM14}).
		
		In order to obtain the reverse implication, suppose that $F^{-1}(G \ominus F)^{-1} \approx G^{-1}$ for all arguments and neither
		$L_{\infty} \hookrightarrow X$ nor $X \hookrightarrow L_{\infty}$.
		Then, it follows from Lemma~\ref{LEMMA : 1} that $X_F \odot X_{G \ominus F} = X_G$.
		However, we also know from Theorem~\ref{THEOREM : PM between CL} that the space $X_{G \ominus F}$ coincides with $M(X_F,X_G)$.
		In consequence, the factorization $X_F \odot M(X_F,X_G) = X_G$ holds.
		
		Finally, the last part about $X_G$-perfectness of the space $X_F$ follows directly from the discussion preceding the theorem.
	\end{proof}

	\section{{\bf What's next?}} \label{SECTION : Open problems}
	
	In this closing section, let us gather and shortly discuss some problems which naturally arise from this work.
	
	\subsection{How badly can factorization fail?}
	
	The following problem (in a slightly different version) was proposed to authors by Professor Mieczys{\l}aw Masty{\l}o.
	
	\begin{problem}
		{\it Suppose that $F, G$ and $G \ominus F$ are $\mathcal{N}$-functions\footnote{Recall that an Orlicz function $F$ is called the {\bf $\mathcal{N}$-function}
		if $\lim_{t\to 0^+}\frac{F(t)}{t} = 0$ and $\lim_{t\to \infty}\frac{F(t)}{t}=\infty$ (cf. \cite[p~47]{Mal04}).}.
		Is it then true that $F^{-1}(G \ominus F)^{-1} \approx G^{-1}$?}
	\end{problem}

	This is a very intriguing question, because in every existing in the literature example showing that $F^{-1}(G \ominus F)^{-1} \not\approx G^{-1}$,
	at least on of the functions $F$ or $G$ is not an $\mathcal{N}$-function.
	A positive answer to the aforementioned question would indicate that factorization may fail only in somewhat pathological situations.
	Conversely, a negative outcome would yield new intriguing examples of Orlicz functions.
	
	\subsection{Orlicz--Lorentz spaces} \label{SUBSECTION : Orlicz--Lorentz}
	
	A function $w \in L_0$ is called the {\bf weight} whenever it is non-negative and decreasing.
	For a given weight $w$, by the {\bf weighted Orlicz space $L_F(w)$} we understand the Orlicz space associated to the Young function $F$
	and the measure $wdt$, that is,
	\begin{equation*}
		L_F(w) \coloneqq \left\{ f \in L_0 \colon \int F(\lambda \abs{f(t)}) w(t)dt < \infty \text{ for some } \lambda = \lambda(f) > 0 \right\}.
	\end{equation*}
	Recall that the {\bf Orlicz--Lorentz space} $\Lambda_F(w)$ is defined as a {\it symmetrization} of the corresponding weighted Orlicz space $L_F(w)$,
	that is, the space $\Lambda_F(w)$ consists of all $f \in L_0$ such that $f^{\star} \in L_F(w)$.
	In particular, if $F$ is just a power function, that is, $F(t) = t^p$ for some $p \geqslant 1$, the space $\Lambda_F(w)$ is usually
	denoted by $\Lambda_p(w)$ and sometimes called the {\bf Lorentz--Sharpley space}.
	
	In the early 90's, Yves Raynaud showed that the space $M(\Lambda_p(w), \Lambda_q(v))$ of pointwise multipliers between
	two Lorentz--Sharpley spaces $\Lambda_p(w)$ and $\Lambda_q(v)$ coincides, up to the equivalence of norms, with another
	Lorentz--Sharpley space $\Lambda_{r}(u)$.
	Here, $1/r = 1/q - 1/p$ and the weight $u$ verifies the relation $u^{1/r}v^{1/q} \approx w^{1/p}$ (see \cite[Proposition~25]{Ray92}).
	
	\begin{problem} \label{PROBLEM : WOL}
		{\it Provide a representation of the space $M(\Lambda_M(w), \Lambda_N(v))$ and deduce the factorization of Orlicz--Lorentz spaces.}
	\end{problem}
	
	Note that Theorem~\ref{THEOREM : PM between CL} gives an answer to the above problem only in the case when both weights are the same.
	To see this, it is enough to observe that the Orlicz--Lorentz space $\Lambda_F(w)$ coincides with the Calder{\' o}n--Lozanovski{\u \i} construction
	$(\Lambda(w))_F$, where the space $\Lambda(w)$ is defined via the norm $\norm{f}_{\Lambda(w)} \coloneqq \int f^{\star}(t)w(t)dt$.
	Then, plainly,
	\begin{equation*}
		M(\Lambda_F(w), \Lambda_G(w)) = M((\Lambda(w))_F, (\Lambda(w))_G) = (\Lambda(w))_{G \ominus F} = \Lambda_{G \ominus F}(w).
	\end{equation*}
	
	\subsection{Musielak--Orlicz setting}
	
	Another, much more general, way to look at the situation presented in this work is to consider a variant of Calder{\' o}n--Lozanovski{\u \i}'s
	construction $X_F$ in which the Young function $F$ is replaced by the so-called Musielak--Orlicz function $\Phi$ (see \cite[p.~11]{Mal04} for details).
	For lack of a better idea, and only for purposes of this paragraph, let us call them the {\bf generalized Calder{\' o}n--Lozanovski{\u \i} spaces} $X_{\Phi}$.
	In the light of the recent results obtained in \cite{LT21}, the following problem seems tempting.
	
	\begin{problem} \label{PROBLEM : MOCL}
		{\it Lift the results of this paper to the setting of generalized Calder{\' o}n--Lozanovski{\u \i} spaces.}
	\end{problem}
	
	We firmly believe that an answer to the above question is within reach, but we have not verified all the details. However we have some thoughts
	and tips for the committed reader.
	To approach this problem, we need to generalise Lemmas~3 and 4 from \cite{LT21} to the setting of generalized Calder{\' o}n--Lozanovski{\u \i} spaces.
	Then we can follow the proof of Theorem \ref{THEOREM : PM between CL} (replacing Young conjugates with the version appropriate for Musielak--Orlicz functions),
	with the main change in \enquote{Step 1} where we need the above lemmas to proceed.
	The rest is just checking the details.
	(Parenthetically speaking, we have deliberately refrained from providing any exact formulas or definitions here because they are quite
	ugly and technical. Anyway, everything - modulo references to literature - can be found in \cite{LT21}.)
	Note also that an answer to Problem~\ref{PROBLEM : MOCL} will immediately yield an answer to Problem~\ref{PROBLEM : WOL}.
	To see this, just consider Musielak--Orlicz functions of the following form $\Phi(t,s) = F(t)w(s)$, where $F$ is a Young function
	and $w$ is a weight.
	
	Actually, there is one more thing.
	
	\begin{problem}
		{\it Solve the generalized version of Problem~\ref{PROBLEM : WOL} in which the class of Orlicz--Lorentz spaces $\Lambda_F(w)$
		is replaced by the class of symmetrizations of the generalized Calder{\' o}n--Lozanovski{\u \i} spaces $X_{\Phi}$.}
	\end{problem}
	
	For now, however, this problem looks like the \enquote{ultimate horror}.
	
	\subsection{Non-symmetric variant}
	
	The most straightforward way to generalize our results is to give up the assumption about symmetry.
	
	\begin{problem}
		{\it Prove Theorems~\ref{THEOREM : PM between CL} and \ref{THEOREM : main thm not nice} without assuming that the space $X$ is rearrangement invariant.}
	\end{problem}
	
	It seems that this problem is essentially similar to the transition from the case of Orlicz spaces to Musielak--Orlicz spaces (as done in \cite{LT21}).
	Most likely, its proof will rely on the \enquote{localization} of the arguments used to show Theorems~\ref{THEOREM : PM between CL} and \ref{THEOREM : main thm not nice}.
	As before, the main obstacle will be \enquote{Step 1} in the proof of Theorem~\ref{THEOREM : PM between CL}.
	While solving this problem, however, one should not only consider the behaviour of the Young functions $F$ and $G$, but also the rate of decay
	(or lack thereof) of the norm of the indicator functions.
	
	\subsection{Non-commutative affairs}
	
	There are many papers (like, for example, those by Han \cite{Han15}, Han, Shao and Yan \cite{HSY21} or de Jager and Labuschagne \cite{JL19})
	contemplating non-commutative analogues of the results obtained in \cite{KLM12} and \cite{KLM14}.
	The prospect of enhancing them with the technology invented here seems very promising.
	All this encourages us to pose the following
	
	\begin{problem}
		{\it Provide non-commutative variants of the main results obtained here.}
	\end{problem}

\end{document}